\newtheoremstyle{mytheorem}
  {8pt} 
  {6pt} 
  {} 
  {} 
  {\bfseries} 
  {.} 
  {.5em} 
  {} 
\theoremstyle{mytheorem}
\newtheorem{theorem}{Theorem}[section]
\theoremstyle{mytheorem}
\theoremstyle{mytheorem}
\theoremstyle{mytheorem}
\newtheorem*{remark}{Remark}
\theoremstyle{mytheorem}
\newtheorem{corollary}{Corollary}[section]
\begin{document}

\title{Sub-Riemannian cubics in SU(2)}
\author{Michael Swaddle}
\email{meswaddle@protonmail.com}
\author{Lyle Noakes}
\email{lyle.noakes@uwa.edu.au}
\affiliation{Faculty of Engineering, Computing and Mathematics, The University of Western Australia, Crawley 6009, Australia}
\date{\today}

\begin{abstract}
    \noindent \textit{Sub-Riemannian cubics} are a generalisation of Riemannian cubics to a sub-Riemannian manifold. Cubics are curves which minimise the integral of the norm of the covariant acceleration. Sub-Riemannian cubics are cubics which are restricted to move in a horizontal subspace of the tangent space. When the sub-Riemannian manifold is also a Lie group, sub-Riemannian cubics correspond to what we call a sub-Riemannian Lie quadratic in the Lie algebra. The present article studies sub-Riemannian Lie quadratics in the case of \( \mathfrak{su}(2)\), focusing on the long term dynamics.
\end{abstract}
\pacs{}
\maketitle

\section{Sub-Riemannian cubics}
    Let \(G\) be a matrix Lie group with a positive-definite bi-invariant inner product \( \langle, \rangle_B \) on the Lie algebra \( \mathfrak{g} := TG_e\). Choose a positive-definite self-adjoint operator \( \mathcal{J} \) with respect to \( \langle , \rangle_B\). Now define \( \langle , \rangle_\mathcal{J} \), by \( \langle X ,Y \rangle_{\mathcal{J}} = \langle X , \mathcal{J} Y \rangle_B \). Given a basis \( \hat{e}_1, \dots \hat{e}_n \) for \( \mathfrak{g}\), define an \( n \times n\) matrix \(\mathcal{J}_{ij} \) by \( \mathcal{J}_{ij} = \langle \hat{e}_i, \hat{e}_j \rangle_\mathcal{J}\). Then given \( V =  v_k  \hat{e}_k \in \mathfrak{g}, \, v_k \in \mathbb{R} \), we can compute \( \mathcal{J} V = v_i \mathcal{J}_{ij} \hat{e}_j\), where repeated indicies are summed.

    A left-invariant metric \( \langle , \rangle\) is defined on \( TG\) by the formula \( \langle X,Y\rangle_g := \langle g^{-1} X, g^{-1} Y \rangle_\mathcal{J} \). Then given a vector subspace of \( \delta_1 \subset \mathfrak{g}\), we define a left-invariant \textit{distribution} on \(G\) to be the vector sub-bundle \( \delta \) of \( TG\), whose fibre \( \delta_g\) over \(g\) is \( g \delta_1\).

   Previous work \cite{cubics1,cubics2,cubics3,null-cubics,noakes-ratiu-euler,duality-and-cubics} has investigated the critical points of the functional
	\begin{equation}
        S\left[\widetilde{x}\right] := \int_0^T \langle \nabla_t\dot{\widetilde{x}}, \nabla_t \dot{\widetilde{x}} \rangle_{\hat{x}} \, dt ,
    \end{equation}
    where \( \widetilde{x} : \left[0,T\right] \rightarrow M \), and \( \widetilde{x}(0) ,\widetilde{x}(T),  \dot{ \widetilde{x}}(0)\) and \( \dot{\widetilde{x}}(T)\) are given.  \( \nabla_t \dot{\widetilde{x}}\) denotes the covariant acceleration, and \(M\) is a Riemannian manifold. In this situation critical points of \( S\) are called \textit{Riemannian cubics}. We now consider the case where, \(M=G\) and \( \dot{\widetilde{x}} \) is constrained to be in the distribution \( \delta\). With this constraint, we will call critical points of \(S\) a \textit{sub-Riemannian cubic}.

    Note that restricting the original Riemannian metric to the distribution makes \(G\) a sub-Riemannian manifold \cite{montgomery}. When the metric is not bi-invariant, \( \mathcal{J} \neq I\), where \(I\) is the identity matrix, we need an underlying Riemannian metric to define \( \nabla_t \dot{\widetilde{x}} \), which is not necessarily restricted to the distribution.

    The equations for normal sub-Riemannian cubics can be derived from the Pontryagin Maximum Principle (PMP). For a reference on the PMP see \cite{pmp-book}. Usually the PMP applies for control systems on \( \mathbb{R}^n\) but there is a version for control systems on a Lie group \cite{opt-control-lie-group}.

    As \(G\) is left-invariant \( \dot{\widetilde{x}} \) is constrained by the equation
	\begin{align}
		\dot{\widetilde{x} } := \widetilde{x} \, \widetilde{V},
	\end{align}
    where \( \widetilde{V} : \left[0,T\right] \rightarrow  \delta_1 \). We also require \(\delta_1 \) to be a bracket generating subset of \( \mathfrak{g}\).

    Let left Lie reduction by \( \widetilde{x} \) be denoted \(L^{-1}\). Then the left Lie reduction of the covariant acceleration \( \nabla_t \dot{\widetilde{x}}\) to \( \mathfrak{g}\) includes a first order derivative of \( \widetilde{V} \),
	\begin{equation}
        L^{-1}(\nabla_t \dot{\widetilde{x}}) = \dot{\widetilde{V}} - \mathcal{J}^{-1}\left[ \mathcal{J} \widetilde{V},\widetilde{V}\right].
    \end{equation}

    To use the PMP define a new control function \( u:\left[0,T\right] \rightarrow \mathfrak{g}\), and treat \( \widetilde{V}\) as an additional state variable. So to  minimise \( S[\widetilde{x}]\) subject to the constraints
	\begin{align}
		\dot{\widetilde{x} } &= \widetilde{x} \, \widetilde{V}, \\
		\dot{\widetilde{V}} &= u,
	\end{align}
    we form the PMP \textit{Hamiltonian}, \(H\),  given by
	\begin{align}
        H(\widetilde{x}, \widetilde{V},u, \lambda,\vartheta, \nu)  := \lambda(\widetilde{x} \,\widetilde{V}) +  \vartheta(u) + \frac{\nu}{2} \left\Vert u - \mathcal{J}^{-1} \left[ \mathcal{J} \widetilde{V}, \widetilde{V}\right] \right\Vert^2_\mathcal{J},
	\end{align}
    where \( \lambda \in T^{*}G_{\widetilde{x}}\) and \( \vartheta \in \mathfrak{g}^* \) are the \textit{co-states}, and \( \nu \leq 0 \). Then the PMP says maximising \(H\) for all \(t\) is a necessary condition for minimising \( S\).

	By the PMP, the co-states are required to satisfy
	\begin{align*}
		\dot{\lambda}(y) &= -dH_{\widetilde{x}}(y) = -\lambda(y \widetilde{V} ),\\
        \dot{\vartheta}(Z) &= -dH_{\widetilde{V}}(Z) =- \lambda(\widetilde{x} Z) - \nu \left\langle- \mathcal{J}^{-1}\left[ \mathcal{J} Z, \widetilde{V}\right] + \mathcal{J}^{-1}\left[ \mathcal{J}\widetilde{V},Z \right], u - \mathcal{J}^{-1} \left[ \mathcal{J} \widetilde{V}, \widetilde{V}\right] \right\rangle_\mathcal{J}
	\end{align*}
    \( \forall y \in TG_{\widetilde{x}} \) and \( Z \in \mathfrak{g} \). \( \lambda \in T^{*}G_{\widetilde{x}}\) can be associated with a \( \lambda^* \in \mathfrak{g}^{*}\) via left multiplication, \( \Lambda^*(V) = \lambda(L(\widetilde{x}) V)\). Differentiating \( \Lambda^*\),
	\begin{align*}
		\dot{\Lambda^*}(y) &= \dot{\lambda}(\widetilde{x}\,y) + \lambda(\dot{\widetilde{x}}\,y)\\
				   &= - \lambda(\widetilde{x}\, y\, \widetilde{V} ) + \lambda( \widetilde{x} \,\widetilde{V} y)\\
				   &= -\Lambda^*(y \,\widetilde{V} ) + \Lambda^*( \widetilde{V} y )\\
				   &= \Lambda^*\left(\left[ \widetilde{V},y\right] \right).
	\end{align*}
    Finally  \( \Lambda^* \) can be associated with \( \Lambda \in \mathfrak{g} \) via the bi-invariant inner product, \( \langle, \rangle_B\)
	\begin{equation*}
        \Lambda^*(y) = \langle \Lambda, y \rangle_B.
	\end{equation*}
	Likewise for \( \vartheta\) there is an associated \( \Theta \in \mathfrak{g} \).
	\begin{equation*}
		\vartheta(w) = \langle \Theta, w \rangle_B.
	\end{equation*}
	This gives the following equations for the costates
	\begin{align*}
		\dot{\Lambda} &= \left[ \Lambda , \widetilde{V} \right],\\
		\dot{\Theta} &=  -\Lambda - \nu \bigg( \mathcal{J}\left[\widetilde{V},u\right] - \left[ \mathcal{J} \widetilde{V},u \right]
\\
&+ \mathcal{J} \left[ \widetilde{V},\mathcal{J}^{-1} \left[ \mathcal{J}\widetilde{V},\widetilde{V} \right] \right] - \left[ \mathcal{J}\widetilde{V},\mathcal{J}^{-1} \left[ \mathcal{J}\widetilde{V},\widetilde{V} \right] \right]\bigg).
	\end{align*}
	By the PMP there are two cases to consider.

	\subsubsection{Normal case}
        In the normal case, \( \nu < 0 \), the optimal control \( u^*\) must maximise \(H\). From now on we consider the case when \( \langle,\rangle_{\mathcal{J}}\) is simply bi-invariant and so \( \mathcal{J} = I\). We now use the notation \( \langle, \rangle :=\langle, \rangle_\mathcal{J} = \langle, \rangle_B \). Without loss of generality set \( \nu = -1\). Hence in the normal case, the PMP Hamiltonian can equivalently written as
		\begin{equation*}
			H = \big\langle \Lambda , \widetilde{V} \big\rangle + \big\langle \Theta , u \big\rangle - \frac{1}{2} \big\langle u , u \big\rangle.
		\end{equation*}
		Maxima occur when \( d H(u^*)_u = 0\), so
		\begin{equation}
		 \big\langle \Theta, u^* \big\rangle - \big\langle u, u^* \big\rangle  = 0.
		\end{equation}
		Therefore optimal controls occur when \( \mathrm{proj}_{\delta_1} ( \Theta) = u = \dot{\widetilde{V}}\). The equations for the costates reduce to a single equation, which gives the following theorem.

        \begin{theorem}
            \( \widetilde{x}\) is a normal sub-Riemannian cubic if and only if
        \begin{equation}
			\ddot{\Theta} = \left[ \dot{\Theta}, \widetilde{V} \right].
			\label{equation:normal-cubic}
		\end{equation}
        \end{theorem}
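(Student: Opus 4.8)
The plan is to read the result off directly from the costate equations already derived via the PMP, specialised to the bi-invariant case $\mathcal{J}=I$. Recall that, by the PMP, $\widetilde{x}$ is a normal sub-Riemannian cubic precisely when we may take $\nu=-1$ and there exist costates $\Lambda,\Theta\in\mathfrak{g}$ (obtained from $\lambda$ and $\vartheta$ as in the excerpt) satisfying the two costate equations together with the maximisation condition $\dot{\widetilde{V}}=u=\mathrm{proj}_{\delta_1}(\Theta)$. So the strategy has two halves: first simplify the costate system when $\mathcal{J}=I$, then eliminate $\Lambda$ to obtain a single second-order equation in $\Theta$ and check that the reconstruction runs in reverse.

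First I would simplify $\dot{\Theta}$. With $\mathcal{J}=I$ the four bracket terms multiplying $\nu$ collapse in pairs: $\mathcal{J}[\widetilde{V},u]-[\mathcal{J}\widetilde{V},u]=[\widetilde{V},u]-[\widetilde{V},u]=0$, and since $\mathcal{J}^{-1}[\mathcal{J}\widetilde{V},\widetilde{V}]=[\widetilde{V},\widetilde{V}]=0$ the remaining two terms vanish as well. Hence the costate system reduces to
\begin{align*}
\dot{\Lambda} &= [\Lambda,\widetilde{V}], \\
\dot{\Theta} &= -\Lambda.
\end{align*}

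For the forward implication, differentiate the second equation and substitute the first: $\ddot{\Theta}=-\dot{\Lambda}=-[\Lambda,\widetilde{V}]=[-\Lambda,\widetilde{V}]=[\dot{\Theta},\widetilde{V}]$, which is \eqref{equation:normal-cubic}. For the converse, suppose $\widetilde{x}$ has horizontal left-reduced velocity $\widetilde{V}=L^{-1}\dot{\widetilde{x}}$ and that there is a curve $\Theta\colon[0,T]\to\mathfrak{g}$ with $\dot{\widetilde{V}}=\mathrm{proj}_{\delta_1}(\Theta)$ and $\ddot{\Theta}=[\dot{\Theta},\widetilde{V}]$. Set $\Lambda:=-\dot{\Theta}$. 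Then $\dot{\Theta}=-\Lambda$ by definition, while $\dot{\Lambda}=-\ddot{\Theta}=-[\dot{\Theta},\widetilde{V}]=[\Lambda,\widetilde{V}]$, so both costate equations hold; the maximisation condition holds by hypothesis; and $\nu=-1\neq 0$. Thus all PMP stationarity conditions for the constrained functional $S$ are met, so $\widetilde{x}$ is a normal sub-Riemannian cubic.

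The only genuinely delicate point is the converse: one must be sure that satisfying the PMP stationarity equations is sufficient, not merely necessary, for $\widetilde{x}$ to be a critical point of $S$. This is where the hypothesis that $\delta_1$ be bracket generating enters — it guarantees (Chow–Rashevskii) that the endpoint-constrained variational problem is well posed and that the normal stationarity equations genuinely cut out its critical points, so no abnormal correction is needed in the reconstruction. One should also observe that $\widetilde{V}$ stays in $\delta_1$ automatically, since $\dot{\widetilde{V}}=\mathrm{proj}_{\delta_1}(\Theta)\in\delta_1$ and $\widetilde{V}(0)$ is horizontal, and that $\widetilde{x}$ is recovered by integrating $\dot{\widetilde{x}}=\widetilde{x}\widetilde{V}$ from the prescribed initial data. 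Everything else is routine substitution.
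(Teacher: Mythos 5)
Your argument is correct and is essentially the paper's own: the paper's (unwritten) proof is exactly the observation that with \( \mathcal{J}=I \) the bracket terms in the costate equation cancel, leaving \( \dot{\Theta}=-\Lambda \) and \( \dot{\Lambda}=[\Lambda,\widetilde{V}] \), which combine to \( \ddot{\Theta}=[\dot{\Theta},\widetilde{V}] \). You go slightly further by spelling out the converse via the reconstruction \( \Lambda:=-\dot{\Theta} \), which the paper leaves implicit since it treats normal cubics as normal PMP extremals by definition.
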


        \begin{remark}
            Denote the projection of \( \Theta\) onto the orthogonal complement, \( \perp\), of \( \delta_1\) by \( \varphi  = \mathrm{proj}_{\perp}(\Theta) \). Let \( \delta_1 = \mathfrak{g}\). Then the resulting equations for normal sub-Riemannian cubics match the bi-invariant Riemannian case, \( \dddot{\widetilde{V}} = \left[ \ddot{\widetilde{V}}, \widetilde{V} \right] \) \cite{noakes-ratiu-euler}. In general solutions to (\ref{equation:normal-cubic}) are hard to find.
        \end{remark}

        \begin{remark}
            One subclass of solutions are the so called \textit{linear Lie quadratics}. In this case, \( \varphi = \varphi_0\) and \( \widetilde{V} = ( q_0 + q_1 t + q_2 t^2 ) \widetilde{V}_0\), where \( q_0 , q_1, q_2  \in \mathbb{R}\), \( \widetilde{V}_0\) is a constant in \( \delta_1\), and \( \varphi_0\) is a constant in the orthogonal complement of \( \delta_1\).
    	\end{remark}

    \(\dot{\Theta}\) can be found in terms of \( \widetilde{x} \). Rewrite (\ref{equation:normal-cubic}), take the adjoint and integrate,
	\begin{align*}
	\mathrm{Ad}(\widetilde{x}) \left( \ddot{\Theta} + [ \widetilde{V}, \dot{\Theta}] \right)  = 0 \\
        \implies \dot{\Theta} = \mathrm{Ad}( \widetilde{x}^{-1} )\mathcal{A}
	\end{align*}
    where \( \mathcal{A} \in \mathfrak{g} \). This simply reflects the fact that \( \dot{\Theta} \) satisfies a \textit{Lax equation} \cite{noakes-lax-constraints} and is therefore isospectral.

    \subsubsection{Abnormal case}
	    The abnormal case is given by \( \nu = 0\). As before, the PMP Hamiltonian can be written as
		\begin{equation*}
			H = \big\langle \Lambda , \widetilde{V} \big\rangle + \big\langle \Theta , u \big\rangle.
		\end{equation*}
		Maxima occur when \( dH(u^*)_u = 0\). Immediately this requires \( \mathrm{proj}_{\delta_1}( \Theta) = 0\), so there is no way to determine \(u\) from the PMP.

    \subsubsection{Bounds}
        Given some function \(f\), we say that \(f \) is \(\mathcal{O}(t^n)\), when for some \( c > 0\), \( \left| f \right| \leq c \left| t^n \right| \), for all \( t \in \mathbb{R}\).

        \begin{corollary}
        \begin{align}
			\big\langle \ddot{\widetilde{V}},\widetilde{V} \big\rangle - \frac{1}{2}  \big\langle \dot{\widetilde{V}} ,  \dot{\widetilde{V}} \big\rangle &= c_1 , \label{equation:cons1}\\
			\big\langle  \ddot{\widetilde{V}} ,\ddot{\widetilde{V}} \big\rangle + \big\langle \dot{\varphi}, \dot{\varphi} \big\rangle &= c_2, \label{equation:cons2}
		\end{align}
		where \( c_1 \in \mathbb{R} \) and \( c_2 \geq 0  \).
        \end{corollary}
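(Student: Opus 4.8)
The plan is to work entirely in the Lie algebra, exploiting the decomposition of \( \Theta \) furnished by the normal-case optimality condition together with bi-invariance of \( \langle , \rangle \). First I would record the orthogonal splitting: since in the normal case \( \mathrm{proj}_{\delta_1}(\Theta) = \dot{\widetilde{V}} \) and \( \varphi = \mathrm{proj}_{\perp}(\Theta) \), we have \( \Theta = \dot{\widetilde{V}} + \varphi \) with \( \widetilde{V}(t) \in \delta_1 \) and \( \varphi(t) \in \perp \) for every \(t\). Because \( \delta_1 \) and \( \perp \) are \emph{fixed} complementary subspaces of \( \mathfrak{g} \), every \(t\)-derivative of \( \widetilde{V} \) lies in \( \delta_1 \) and every \(t\)-derivative of \( \varphi \) lies in \( \perp \); in particular \( \langle \widetilde{V}^{(j)}, \varphi^{(k)} \rangle = 0 \) for all \( j,k \). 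I would also recall the two consequences of bi-invariance that do all the work: \( \langle [X,Y],Z \rangle = \langle X,[Y,Z] \rangle \), and hence \( \langle [X,Y],Y \rangle = 0 \).

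For (\ref{equation:cons2}), differentiate \( \langle \dot{\Theta}, \dot{\Theta} \rangle \) and use the normal cubic equation (\ref{equation:normal-cubic}): \( \tfrac{d}{dt}\langle \dot{\Theta},\dot{\Theta}\rangle = 2\langle [\dot{\Theta},\widetilde{V}],\dot{\Theta}\rangle = 0 \) by ad-skew-symmetry (equivalently, this is immediate from the isospectrality noted above, \( \dot{\Theta} = \mathrm{Ad}(\widetilde{x}^{-1})\mathcal{A} \), since \( \mathrm{Ad} \) preserves \( \langle,\rangle \)). Then expand \( \dot{\Theta} = \ddot{\widetilde{V}} + \dot{\varphi} \); the cross term \( \langle \ddot{\widetilde{V}},\dot{\varphi}\rangle \) vanishes by the orthogonality above, leaving \( \langle \ddot{\widetilde{V}},\ddot{\widetilde{V}}\rangle + \langle \dot{\varphi},\dot{\varphi}\rangle = \langle \dot{\Theta},\dot{\Theta}\rangle =: c_2 \), which is \( \geq 0 \) because \( \langle,\rangle \) is positive-definite.

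For (\ref{equation:cons1}), set \( E_1 := \langle \ddot{\widetilde{V}},\widetilde{V}\rangle - \tfrac12\langle \dot{\widetilde{V}},\dot{\widetilde{V}}\rangle \); a direct differentiation collapses to \( \dot{E}_1 = \langle \dddot{\widetilde{V}},\widetilde{V}\rangle \). Now project the cubic equation, written as \( \dddot{\widetilde{V}} + \ddot{\varphi} = [\ddot{\widetilde{V}} + \dot{\varphi},\widetilde{V}] \), onto \( \widetilde{V} \): the right-hand side vanishes since \( \langle [\,\cdot\,,\widetilde{V}],\widetilde{V}\rangle = 0 \), and \( \langle \ddot{\varphi},\widetilde{V}\rangle = 0 \) because \( \ddot{\varphi}\in\perp \) while \( \widetilde{V}\in\delta_1 \). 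Hence \( \langle \dddot{\widetilde{V}},\widetilde{V}\rangle = 0 \), so \( E_1 \) is constant, equal to some \( c_1 \in \mathbb{R} \).

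There is no serious obstacle here — the corollary is a bookkeeping exercise once the splitting \( \Theta = \dot{\widetilde{V}} + \varphi \) and the invariance identities are in hand. The only point requiring care is verifying that the cross terms between \( \delta_1 \)-components and \( \perp \)-components genuinely vanish at every order of differentiation, which uses that \( \delta_1 \) (hence \( \perp \)) is a \(t\)-independent subspace rather than a moving distribution, together with keeping the normalisation \( \nu = -1 \) consistent so that \( \mathrm{proj}_{\delta_1}(\Theta) \) equals \( \dot{\widetilde{V}} \) exactly and not a scalar multiple of it.
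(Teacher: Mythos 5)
Your proposal is correct and follows essentially the same route as the paper: both conserved quantities come from pairing the normal cubic equation \( \ddot{\Theta} = [\dot{\Theta},\widetilde{V}] \) with \( \widetilde{V} \) and with \( \dot{\Theta} = \ddot{\widetilde{V}} + \dot{\varphi} \) respectively, using ad-skew-symmetry of the bi-invariant metric and the orthogonality of the \( \delta_1 \)- and \( \perp \)-components. Your write-up is in fact slightly more careful than the paper's, which omits the explicit splitting \( \Theta = \dot{\widetilde{V}} + \varphi \) and the verification that the cross terms vanish.
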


        \begin{proof}
         First take the inner product of (\ref{equation:normal-cubic}) and \( \widetilde{V} \) to find \( \langle \dddot{\widetilde{V}},\widetilde{V} \rangle = 0 \). Next take the inner product of (\ref{equation:normal-cubic}) with \( \ddot{\widetilde{V}} + \dot{\varphi}\) to find \( \langle \dddot{ \widetilde{V}}, \ddot{\widetilde{V}} \rangle + \langle \dot{\varphi}, \dot{\varphi} \rangle = 0\). Integrating these gives the result.
        \end{proof}

        \begin{corollary}
            \[ \frac{1}{2} c_1 t^2 + c_5 t + c_6 \leq \big\langle \widetilde{V},\widetilde{V} \big\rangle \leq  \mathcal{O}(t^4). \]
        \end{corollary}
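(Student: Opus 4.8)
The plan is to reduce both inequalities to elementary one-variable estimates on the scalar function \( f(t) := \langle \widetilde{V}, \widetilde{V} \rangle \), obtained by twice integrating the two conservation laws (\ref{equation:cons1}) and (\ref{equation:cons2}) of the previous corollary.

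For the lower bound I would differentiate twice: \( \dot f = 2\langle \dot{\widetilde{V}}, \widetilde{V}\rangle \) and \( \ddot f = 2\langle \ddot{\widetilde{V}}, \widetilde{V}\rangle + 2\langle \dot{\widetilde{V}}, \dot{\widetilde{V}}\rangle \). Substituting \( \langle \ddot{\widetilde{V}}, \widetilde{V}\rangle = c_1 + \tfrac12 \langle \dot{\widetilde{V}}, \dot{\widetilde{V}}\rangle \) from (\ref{equation:cons1}) gives \( \ddot f = 2c_1 + 3\langle \dot{\widetilde{V}}, \dot{\widetilde{V}}\rangle \), and since \( \langle , \rangle \) is positive-definite this yields the uniform bound \( \ddot f \ge 2c_1 \). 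Integrating twice away from \( t = 0 \) (the same inequality is produced integrating forwards or backwards, so it holds for all \( t \in \mathbb{R} \)) gives \( f(t) \ge c_1 t^2 + \dot f(0)\,t + f(0) \). To land on the stated form I would split on the sign of \( c_1 \): if \( c_1 \ge 0 \) then \( c_1 t^2 \ge \tfrac12 c_1 t^2 \), so one may take \( c_5 = \dot f(0) \), \( c_6 = f(0) \); if \( c_1 < 0 \) the claimed parabola is nonpositive, so \( f \ge 0 \ge \tfrac12 c_1 t^2 \) and one may simply take \( c_5 = c_6 = 0 \).

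For the upper bound I would use only (\ref{equation:cons2}): since \( \langle \dot\varphi, \dot\varphi\rangle \ge 0 \) we get \( \langle \ddot{\widetilde{V}}, \ddot{\widetilde{V}}\rangle \le c_2 \), i.e.\ \( \Vert \ddot{\widetilde{V}}(t) \Vert \le \sqrt{c_2} \) for all \( t \). Integrating this twice gives \( \Vert \dot{\widetilde{V}}(t) \Vert \le \Vert \dot{\widetilde{V}}(0)\Vert + \sqrt{c_2}\,|t| \) and then \( \Vert \widetilde{V}(t)\Vert \le \Vert \widetilde{V}(0)\Vert + \Vert \dot{\widetilde{V}}(0)\Vert\,|t| + \tfrac{\sqrt{c_2}}{2} t^2 \). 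Squaring the right-hand side produces a polynomial of degree four in \( |t| \), whence \( \langle \widetilde{V}, \widetilde{V}\rangle = \Vert \widetilde{V}\Vert^2 \) is \( \mathcal{O}(t^4) \).

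I do not expect a genuine obstacle: once the conservation laws are available, the argument is a pair of double integrations. The points that need a little care are that the lower bound must hold for all \( t \in \mathbb{R} \), so one integrates in both time directions and checks the inequality is preserved; the case distinction on the sign of \( c_1 \) needed to get the factor \( \tfrac12 \) into a single uniform statement; and the (cosmetic) fact that, with the definition of \( \mathcal{O} \) adopted here, the quartic upper bound should be read for \( |t| \) large (or with an additive constant absorbed), since \( |f| \le c\,t^4 \) cannot hold at \( t = 0 \) when \( \widetilde{V}(0) \neq 0 \).
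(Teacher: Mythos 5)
Your proof is correct and follows essentially the same route as the paper: the upper bound comes from \( \Vert \ddot{\widetilde{V}} \Vert \leq \sqrt{c_2} \) integrated twice (the paper does this componentwise, you do it directly on the norm, which is equivalent), and the lower bound comes from bounding \( \frac{d^2}{dt^2}\langle \widetilde{V},\widetilde{V}\rangle \) below via (\ref{equation:cons1}). Your version is in fact slightly more careful than the paper's: you obtain the correct constant \( \frac{d^2}{dt^2}\langle \widetilde{V},\widetilde{V}\rangle = 2c_1 + 3\langle \dot{\widetilde{V}},\dot{\widetilde{V}}\rangle \) (the paper's equation (\ref{equation:cons3}) is off by a factor of \(2\)), and you explicitly handle the sign of \( c_1 \), the backward-in-time integration, and the behaviour of the quartic bound near \( t=0 \), none of which the paper addresses.
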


        \begin{proof}
            As \( \big\langle \dot{\varphi}, \dot{\varphi} \big\rangle \geq 0\), we have \( \big\langle \ddot{\widetilde{V}} ,\ddot{\widetilde{V}} \big\rangle \leq c_2 \). This argument can be repeated for the components of \( \ddot{\widetilde{V}} \), so given \( \widetilde{V} =  v_k(t) \widehat{e}_k\), then  \(\ddot{v}_k^2 \big\langle \hat{e}_k, \hat{e}_k \big\rangle  \leq c_2\). As we are working with the bi-invariant metric,  we have \( - \sqrt{c_2} \leq \ddot{v_k} \leq \sqrt{c_2} \) and so \( | v_k | \leq \frac{1}{2} \sqrt{c_2} t^2 + c_3 t + c_4 \), where \( c_3 \) and \(c_4\) are some other constants. Therefore \( \big\langle  \widetilde{V} ,\widetilde{V} \big\rangle  \) is bounded above by  \( \mathcal{O}(t^4)\).

		    The same argument can be used to to show \(\big\langle \dot{\varphi}, \dot{\varphi} \big\rangle\) is bounded above a constant, and then \( \left\Vert \varphi \right\Vert \) is bounded above by a linear function. This then shows \(\big\langle \varphi, \varphi \big\rangle \) is bounded above by \( \mathcal{O}(t^2)\).

		    Equation (\ref{equation:cons1}) can be written as
	    	\begin{align}
			\frac{d^2}{dt^2} \left( \big\langle \widetilde{V},\widetilde{V} \big\rangle \right) = c_1 + \frac{3}{2} \big\langle \dot{\widetilde{V}},\dot{\widetilde{V}} \big\rangle.
 	    	\label{equation:cons3}
            \end{align}
 	    	Immediately this yields the lower bound
		\(  \big\langle \widetilde{V},\widetilde{V} \big\rangle \geq \frac{1}{2} c_1 t^2 + c_5 t + c_6 \) where \( c_5\) and \( c_6 \) are other constants.
    \end{proof}

\section{Sub-Riemannian Lie quadratics and symmetric pairs}
	\subsection{Symmetric pairs}
    Let \( (\mathfrak{g}, \mathfrak{h} )\) be a symmetric pair, namely \( \mathfrak{g} = \mathfrak{m} + \mathfrak{h} \) where the following properties hold
	\begin{align*}
		\left[ \mathfrak{m}, \mathfrak{m} \right] &\subseteq \mathfrak{h}, \\
		\left[ \mathfrak{h}, \mathfrak{m} \right] &\subseteq \mathfrak{m}, \\
		\left[ \mathfrak{h}, \mathfrak{h} \right] &\subseteq \mathfrak{m}.
	\end{align*}
    An example is \( \mathfrak{su}(2) \) where \( \mathfrak{m} \) is spanned by the Pauli matrices \( i \sigma_1\) and \( i \sigma_2\), and \(\mathfrak{h}\) is spanned by \( i \sigma_3\). Suppose we set \( \delta_1 = \mathfrak{m}\). The equations for normal sub-Riemannian cubics in \( \mathfrak{g} \) separate into two components.
	\begin{align*}
		\ddot{\varphi} &= \left[\ddot{\widetilde{V}},\widetilde{V}\right],\\
		\dddot{\widetilde{V}} &= \left[ \dot{\varphi}, \widetilde{V}\right].
	\end{align*}
	Integrating the first equation and substituting leaves
	\begin{align}
		\dddot{\widetilde{V}} &= \left[ \left[\dot{\widetilde{V}},\widetilde{V}\right],\widetilde{V}\right] + \left[ \widetilde{\mathcal{C}},\widetilde{V}\right].
	\label{equation:sym-cubic}
	\end{align}
    We call \( \widetilde{V}\) which satisfy this equation a \textit{sub-Riemannian Lie quadratic}.
	One simple solution to this is \( \widetilde{V} = \widetilde{V}_0 + \widetilde{V}_1 t \), where \( \widetilde{V}_0 \) and  \( \widetilde{V}_1\) are chosen so \( \left[\widetilde{V}_0 ,\widetilde{V}_1 \right] = \widetilde{C} \). We call sub-Riemannian Lie quadratics \textit{null} when \( \widetilde{\mathcal{C}}=0 \).

	\subsection{Duality}

    We say \(\widetilde{W}\) is \textit{dual} to \(\widetilde{V}\), when
	\begin{align}
		\widetilde{W} &= -\mathrm{Ad}(\widetilde{x}) \widetilde{V},\\
		\dot{\widetilde{y}} &= \widetilde{y} \, \widetilde{W},
	\end{align}
	where \( \widetilde{y}  = \widetilde{x}^{-1} \).

    In the null case, \( \widetilde{\mathcal{C}} = 0 \), \( \widetilde{V} \) is dual to a non-null \textit{Riemannian Lie quadratic}, \(V\), which is defined by the equation
    \[ \ddot{V} = [ \dot{V}, V]  + C.\]
    Duality was considered for Riemannian Lie quadratics in \cite{duality-and-cubics}. We investigate the sub-Riemannian case.
     \begin{theorem}
    \( \widetilde{V}\) is dual to a rescaled non-null Riemannian Lie quadratic.
    \end{theorem}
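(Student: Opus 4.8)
The plan is to work directly with the dual curve. Set $\widetilde{W} = -\mathrm{Ad}(\widetilde{x})\widetilde{V}$ and $\widetilde{y} = \widetilde{x}^{-1}$, so that $\dot{\widetilde{x}} = \widetilde{x}\widetilde{V}$ and $\dot{\widetilde{y}} = \widetilde{y}\widetilde{W}$. First I would record the differentiation rule $\frac{d}{dt}\mathrm{Ad}(\widetilde{x})Z = \mathrm{Ad}(\widetilde{x})(\dot{Z} + [\widetilde{V},Z])$ for any $\mathfrak{g}$-valued curve $Z$, which follows from $\frac{d}{dt}\widetilde{x}^{-1} = -\widetilde{V}\widetilde{x}^{-1}$. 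Applying it twice gives $\dot{\widetilde{W}} = -\mathrm{Ad}(\widetilde{x})\dot{\widetilde{V}}$ and $\ddot{\widetilde{W}} = -\mathrm{Ad}(\widetilde{x})(\ddot{\widetilde{V}} + [\widetilde{V},\dot{\widetilde{V}}])$; since $\mathrm{Ad}(\widetilde{x})$ is a Lie algebra automorphism, $[\dot{\widetilde{W}},\widetilde{W}] = \mathrm{Ad}(\widetilde{x})[\dot{\widetilde{V}},\widetilde{V}]$, so that $\ddot{\widetilde{W}} - [\dot{\widetilde{W}},\widetilde{W}] = -\mathrm{Ad}(\widetilde{x})\ddot{\widetilde{V}}$.

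Next I would bring in the costate/isospectral structure. With $\mathcal{J} = I$ one has $\Theta = \dot{\widetilde{V}} + \varphi$, and integrating $\ddot{\varphi} = [\ddot{\widetilde{V}},\widetilde{V}]$ gives $\dot{\varphi} = [\dot{\widetilde{V}},\widetilde{V}] + \widetilde{\mathcal{C}}$ with the same constant $\widetilde{\mathcal{C}} \in \mathfrak{h}$ appearing in (\ref{equation:sym-cubic}). The Lax form of (\ref{equation:normal-cubic}) gives $\mathrm{Ad}(\widetilde{x})\dot{\Theta} = \mathcal{A}$ for a constant $\mathcal{A} \in \mathfrak{g}$, hence $\mathrm{Ad}(\widetilde{x})\ddot{\widetilde{V}} = \mathcal{A} - [\dot{\widetilde{W}},\widetilde{W}] - \mathrm{Ad}(\widetilde{x})\widetilde{\mathcal{C}}$. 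Substituting closes the system into a single second-order equation for $\widetilde{W}$ alone: $\ddot{\widetilde{W}} = 2[\dot{\widetilde{W}},\widetilde{W}] - \mathcal{A} + \mathrm{Ad}(\widetilde{x})\widetilde{\mathcal{C}}$. When $\widetilde{\mathcal{C}} = 0$ this is already the non-null Riemannian Lie quadratic equation after the rescaling $W(s) = \widetilde{W}(s/2)$ (with constant $-\tfrac14\mathcal{A}$), recovering the null case noted above.

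For general $\widetilde{\mathcal{C}}$ the remaining task is to control $\mathrm{Ad}(\widetilde{x})\widetilde{\mathcal{C}}$, and here I would exploit that $\mathfrak{h}$ is one-dimensional in $\mathfrak{su}(2)$: both $[\dot{\widetilde{V}},\widetilde{V}]$ and $\widetilde{\mathcal{C}}$ lie in $\mathfrak{h}$, so $[\dot{\widetilde{V}},\widetilde{V}] = \beta(t)\widetilde{\mathcal{C}}$ for a scalar function $\beta$, giving $\mathrm{Ad}(\widetilde{x})\widetilde{\mathcal{C}} = \beta^{-1}[\dot{\widetilde{W}},\widetilde{W}]$ and therefore $\ddot{\widetilde{W}} = (2 + \beta^{-1})[\dot{\widetilde{W}},\widetilde{W}] - \mathcal{A}$. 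A reparametrisation $s = s(t)$ whose rate absorbs the scalar factor $2 + \beta^{-1}$ — this is the ``rescaling'' — should then transform this into $\frac{d^2}{ds^2}W = [\frac{d}{ds}W, W] + C$ with $C$ a nonzero multiple of $\mathcal{A}$, exhibiting $\widetilde{V}$ as dual to the non-null Riemannian Lie quadratic $W(s) := \widetilde{W}(t(s))$.

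I expect the reparametrisation to be the crux. Because $2 + \beta^{-1}$ is genuinely non-constant, one has to verify carefully that absorbing it into $s$ — if necessary together with a compensating rescaling $V \mapsto \alpha V(\alpha\,\cdot)$ of the Lie-algebra variable — lands exactly on the Riemannian Lie quadratic equation and not merely on something close to it, and that $\mathcal{A} \neq 0$ so that the quadratic is genuinely non-null. Tracking $\beta$ through $\dot{\varphi} = (\beta + 1)\widetilde{\mathcal{C}}$, together with the conservation laws derived earlier for $\langle \widetilde{V},\widetilde{V}\rangle$ and $\varphi$, is what I would use to make this precise and to rule out the degenerate situations in which the resulting Riemannian Lie quadratic collapses to a null one.
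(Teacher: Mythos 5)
Your core argument is correct and proves what the paper's own proof proves, but by a genuinely different route. The paper differentiates $\widetilde{W}=-\mathrm{Ad}(\widetilde{x})\widetilde{V}$ three times, substitutes the (null) quadratic equation $\dddot{\widetilde{V}}=[\widetilde{V},[\widetilde{V},\dot{\widetilde{V}}]]$ to get $\dddot{\widetilde{W}}=2[\ddot{\widetilde{W}},\widetilde{W}]$, and integrates once, leaving the integration constant $\widetilde{\mathcal{D}}$ to be identified afterwards from initial data. You instead stop at the second derivative, write $\ddot{\widetilde{W}}-[\dot{\widetilde{W}},\widetilde{W}]=-\mathrm{Ad}(\widetilde{x})\ddot{\widetilde{V}}$, and evaluate the right-hand side from the Lax form $\mathrm{Ad}(\widetilde{x})\dot{\Theta}=\mathcal{A}$ together with $\dot{\Theta}=\ddot{\widetilde{V}}+[\dot{\widetilde{V}},\widetilde{V}]+\widetilde{\mathcal{C}}$. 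This lands directly on $\ddot{\widetilde{W}}=2[\dot{\widetilde{W}},\widetilde{W}]-\mathcal{A}+\mathrm{Ad}(\widetilde{x})\widetilde{\mathcal{C}}$, which for $\widetilde{\mathcal{C}}=0$ is the paper's equation with the constant identified for free as $-\mathcal{A}$ (in fact your value is the consistent one; the paper's stated $\widetilde{\mathcal{D}}=\ddot{\widetilde{V}}(0)+3[\widetilde{V}(0),\dot{\widetilde{V}}(0)]$ carries a sign slip from writing $\widetilde{W}(0)=\widetilde{V}(0)$ rather than $-\widetilde{V}(0)$). Your rescaling $W(s)=\widetilde{W}(s/2)$ with $C=-\tfrac14\mathcal{A}$ is the paper's $a=2b$ rescaling with $b=\tfrac12$. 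Note that the theorem, read with the sentence immediately preceding it, concerns the null case $\widetilde{\mathcal{C}}=0$: the paper's proof silently uses $\dddot{\widetilde{V}}=[\widetilde{V},[\widetilde{V},\dot{\widetilde{V}}]]$ in passing from its third line to the displayed $\dddot{\widetilde{W}}$, which is exactly the null equation. So your first two paragraphs already constitute a complete proof of the statement as intended, and arguably a cleaner one since it makes explicit where $\widetilde{\mathcal{C}}$ enters.

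Your third paragraph, the attempted extension to $\widetilde{\mathcal{C}}\neq 0$, does not go through, and your own suspicion that the reparametrisation is the crux is well founded. Two problems: first, $\beta$ vanishes whenever $[\dot{\widetilde{V}},\widetilde{V}]=0$, so $\beta^{-1}$ need not exist globally. Second, and fatally, no reparametrisation of the form $W(s)=\alpha\,\widetilde{W}(t(s))$ can convert $\ddot{\widetilde{W}}=f(t)[\dot{\widetilde{W}},\widetilde{W}]-\mathcal{A}$ with non-constant $f=2+\beta^{-1}$ into $W''=[W',W]+C$: matching the bracket terms forces $t'f\equiv\alpha$, while the leftover terms $\alpha t''\dot{\widetilde{W}}-\alpha (t')^2\mathcal{A}$ must equal the constant $C$, which (since $\dot{\widetilde{W}}$ is not constant in any nondegenerate case) forces $t''=0$ and $t'$ constant, contradicting non-constancy of $f$. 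The quadratic scaling $t\mapsto(at(b\,\cdot),b\,\cdot)$ is the full symmetry available here, and it only absorbs constant factors. So the general-$\widetilde{\mathcal{C}}$ claim should simply be dropped rather than patched; it is not needed for the theorem. You are also right to flag that non-nullness requires $\mathcal{A}\neq 0$ — the paper does not address this either, and it genuinely fails for special initial data (e.g.\ $\ddot{\widetilde{V}}(0)=\dot{\widetilde{V}}(0)=0$, as the paper itself notes after the proof).
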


    \begin{proof}
    Recall that for any other function \(Z\)
	\begin{equation}
		\frac{d}{dt}\left(\mathrm{Ad}\left(\widetilde{x}\right)Z\right) =\mathrm{Ad}(\widetilde{x}) \left( \dot{Z} + \left[\widetilde{V}, Z\right] \right).
	\end{equation}
	Computing derivatives,
	\begin{align*}
		\dot{\widetilde{W}} &= -\mathrm{Ad}(\widetilde{x} ) \left( \dot{\widetilde{V}} \right),\\
		\ddot{\widetilde{W} } &= -\mathrm{Ad}(\widetilde{x} ) \left( \ddot{\widetilde{V}} + \left[ \widetilde{V} , \dot{\widetilde{V}} \right] \right), \\
		\dddot{\widetilde{W} } &= -\mathrm{Ad}(\widetilde{x} ) \left( \dddot{\widetilde{V}} + \left[\widetilde{V},\ddot{\widetilde{V}}\right] + \left[ \widetilde{V}, \ddot{\widetilde{V}} + \left[ \widetilde{V}, \dot{\widetilde{V}} \right] \right] \right),
	\end{align*}
	which gives
	\begin{align*}
		\dddot{\widetilde{W}  } &= -2 \mathrm{Ad}(\widetilde{x} ) \left(  \left[ \widetilde{V} , \ddot{\widetilde{V}}\right] + \left[ \widetilde{V}, \left[ \widetilde{V} ,\dot{\widetilde{V}} \right] \right] \right).
	\end{align*}
	Then
	\begin{align*}
		\left[ \widetilde{W} ,\ddot{\widetilde{W}  }\right] = \mathrm{Ad}(\widetilde{x} ) \left( \left[ \widetilde{V}, \ddot{\widetilde{V}} \right] + \left[\widetilde{V}, \left[\widetilde{V},\dot{\widetilde{V}} \right] \right] \right) ,
	\end{align*}
	and so
	\begin{align*}
		\dddot{\widetilde{W}  } = 2 \left[ \ddot{\widetilde{W}  } , \widetilde{W}  \right].
	\end{align*}
	Integrating this equation leaves a reparameterised non-null Riemannian Lie quadratic
	\begin{align}
		\ddot{ \widetilde{W} } = 2 \left[ \dot{\widetilde{W} },\widetilde{W} \right] + \widetilde{\mathcal{D}}.
	\end{align}
	Without loss of generality, let \( \widetilde{x}(0) = I\). Consider the equations for \( \widetilde{W} \) and \( \widetilde{V}\) at \( t = 0\). Clearly \( \widetilde{V}(0) = \widetilde{W} (0) \), \( \dot{\widetilde{V}}(0) = \dot{\widetilde{W} }(0)\), and \( \ddot{\widetilde{W} }(0) = \ddot{\widetilde{V}}(0) + \left[ \widetilde{V}(0), \dot{\widetilde{V}}(0)\right] \). First this shows \( \widetilde{W} \) is no longer constrained to \( \delta_1\). Additionally we must have \( \widetilde{\mathcal{D}} = \ddot{\widetilde{V}}(0) + 3 \left[ \widetilde{V}(0), \dot{\widetilde{V}}(0)\right]  \).

	Now let \( V(t) = a \widetilde{W}(b\, t) \), where \( a , b \in \mathbb{R}\). Setting \( a = 2 b\), and \( \mathcal{C} = 2 b^3 \widetilde{\mathcal{D} }\), \( V\) satisfies
	\begin{align}
		\ddot{V} = \left[ \dot{V},V \right] + \mathcal{C},
	\end{align}
	which is the equation for a non null Riemannian Lie quadratic.
    \end{proof}

    Note if we define \( x:[0,T] \rightarrow G\), and \( \dot{x} = x V\), then there is no clear relation between \(x\), a non-null Riemannian cubic and \( \widetilde{x}\). If \( \mathcal{C} = 0 \) then the Lie quadratic is called \textit{null} \cite{null-cubics}. It is possible to integrate the equation for \(V\) in certain groups.  This can occur in a non-trivial way if \( \ddot{\widetilde{V}}(0) = 0 \) and \( \dot{\widetilde{V}}(0) = 0\) or \( \widetilde{V}(0) = 0\). Let
	\[ W = - \mathrm{Ad}(x) V.\]
	Computing derivatives, we find
	\[ \ddot{W} = 0,\]
    which gives \( W = W_0 + W_1 t\), where the \(W_k\) are constant matrices. If \(V\) was known it would be possible to work backwards and compute \(\widetilde{x}\) using the work of \cite{noakes-lax-constraints}.

	\subsection{SU(2)}
        Let \(G = \mathrm{SU}(2)\). Take \( \delta_1 = \mathrm{span}\left( \{ \hat{e}_1 , \hat{e}_2 \} \right) = \mathrm{span}\left( \frac{i}{\sqrt{2}} \{ \sigma_1, \sigma_2 \} \right) \), where \(\sigma_i\) are the Pauli matrices. Let \( \widetilde{\mathcal{C}} = C\frac{i}{\sqrt{2}} \sigma_3\), where \( C \in \mathbb{R}\). Recall \( \mathfrak{su}(2)\) can be identified with \( \mathfrak{so}(3)\). \( \mathfrak{so}(3)\) can then be identified with Euclidean three space, \(\mathbb{E}^3\), with the cross product. As a consquence of the vector triple product formula, we can write for \( \widetilde{V} \) in \( \mathfrak{su}(2) \)
	\begin{align}
		\dddot{\widetilde{V}} = 2 \left( \left\langle \widetilde{V}, \dot{\widetilde{V}} \right\rangle \widetilde{V} - \left\langle \widetilde{V},\widetilde{V} \right\rangle \dot{\widetilde{V}} \right) + [ \widetilde{\mathcal{C}},\widetilde{V}].
	\end{align}

    We can identify \( \widetilde{V} = v_1 (t) \hat{e}_1 + v_2(t) \hat{e}_2 \) with a \( v\in \mathbb{C} \) by taking \( v := v_1 + i v_2\). Then the sub-Riemannian cubic equation in \(\delta_1\) can be written as
	   \[ \dddot{v} = \frac{1}{2} v \left( \dot{\bar{v}} v - \dot{v} \bar{v} \right) - i C v. \]
     Assuming \( v(t) \neq 0\) for all t, define \( \omega : \mathbb{R} \rightarrow \mathbb{S}^1 \subset \mathbb{E}^2 \cong \mathbb{C}\) by
     \[ \omega(t) := \frac{v(t)}{\Vert v(t)\Vert}.\]
     Define \( \mathrm{exp}(y) = ( \cos(y), \sin(y) ) \equiv e^{iy} \in \mathbb{C} \). Choose a \( \vartheta(t_0) \in[0, 2 \pi )\) so that \( \mathrm{exp}(\vartheta(t_0)) = \omega(t_0)\). Then there is a unique continuous function \( \vartheta : \mathbb{R} \rightarrow \mathbb{R}\) such that the diagram
    \begin{center}
    \begin{tikzpicture}
      \matrix (m) [matrix of math nodes,row sep=3em,column sep=4em,minimum width=2em] {
                    &  \mathbb{R} \\
         \mathbb{R} & \mathbb{S}^1 \\};
      \path[-stealth]
        (m-2-1) edge node [below] {$\vartheta$} (m-1-2)
        (m-2-1.east|-m-2-2) edge node [below] {$\omega$} (m-2-2)
        (m-1-2) edge node [right] {$\mathrm{exp}$} (m-2-2)
                edge [dashed,-] (m-2-1);
  \end{tikzpicture}
  \end{center}
  commutes. Then we have \( v(t) = q(t) ( \cos (\vartheta(t)), \sin (\vartheta (t))) = q e^{ i \vartheta}\), where \( q(t) = \Vert v(t) \Vert = \Vert \widetilde{V} \Vert \).

  Substituting back, and taking the real and imaginary components gives the two equations
	\begin{align}
		\dddot{q} - 3 \dot{q} \, \dot{\vartheta}^2 - 3 q\, \ddot{\vartheta} \dot{\vartheta}  &= 0,\\
		q \dddot{\vartheta} + 3 \dot{q}\, \ddot{\vartheta}  + 3 \ddot{q}\, \dot{\vartheta} - q \,\dot{\vartheta}^3 +  q^3\, \dot{\vartheta} + C\, q &= 0.
        \label{equation:complex}
	\end{align}
	Multiplying the first equation by \( r\), and integrating leaves
    \[ -\frac{1}{2} \dot{q}^2 + \ddot{q} q - \frac{3}{2} q^2 \dot{\vartheta}^2  +c_1=0,\]
    where \( c_1 \in \mathbb{R} \). Note that equation follows directly from equation (\ref{equation:cons3}), but we use the complex structure to show several additional properties.

	\subsection{ \(c_1 > 0\) }
    \label{subsection:su2}
    Let \( c_1 > 0 \). Recall \( \langle V , V \rangle \) is at most \( \mathcal{O}(t^4)\). Therefore \( q \) increases no faster than \( \mathcal{O}(t^2)\). Likewise \( \dot{q}^2 + q^2 \dot{\vartheta}^2\) must not increase faster than \(\mathcal{O}(t^2)\).

    Also note \( q^2 \dot{\vartheta}^2 \leq c_2 + c_1 \). Recall \(q^2 \) was bounded below by a quadratic and above by \( \mathcal{O}(t^4)\), so at most \( \dot{\vartheta}^2 = \mathcal{O}(t^{-2} )\). Additionally \(q^2 \dot{\vartheta}^2\) is at most \( \mathcal{O}(t^{-2})\). Therefore,
	\[ \frac{-1}{2} \dot{q}^2 + \ddot{q}q = c_1 + \mathcal{O}(t^{-2} ). \]
    First set \( Y = \dot{q}^2\). Then it follows
    \[  - Y + \frac{d Y}{dq } q = 2 c_1 + \mathcal{O}(t^{-2}), \]
    so
    \[ \frac{d}{dq} \left( \frac{Y}{q} \right) = \frac{2 c_1 +\mathcal{O}(t^{-2})}{q^2}.\]
    Integrating with respect to \(q\)
    \[ \dot{q}^2 = c_7 q - 2 c_1  + \mathcal{O}(t^{-2}), \]
    where \( c_7 \geq 0\). Up to \( \mathcal{O}(t^{-2})\) error, and as \( q\) is at most \( \mathcal{O}(t^2) \), we can write
    \[ \dot{q} = \left( c_7 q - 2 c_1 \right)^{ \frac{1}{2}} + \left( c_7 q - 2 c_1 \right)^{ - \frac{1}{2} } \mathcal{O}(t^{-2} ). \]
    Then
    \begin{align*}
     \frac{\dot{q}}{c_7 q - 2 c_1} = 1 + \mathcal{O}(t^{-2}).
    \end{align*}
    Integrating with respect to \(t\)
    \[ \frac{2 ( c_7 q - 2 c_1)^{\frac{1}{2}}}{c_7} = t + \frac{c_8}{c_7} + \mathcal{O}(t^{-1}), \]
    which gives
    \[ q = \frac{ ( c_7 t + c_8)^2}{4 c_7} +\frac{ 2 c_1}{ c_7} + \mathcal{O}(t^{-1} ).\]

	\section{Asymptotics}
    In \(G = \mathrm{SU}(2) \), and with \( c_1 >0\), it is possible to show that long term asymptotes exist. For Riemannian cubics in \(\mathrm{SO}(3)\), it was established that a limit
    exists in \cite{null-cubics}. In \(\mathrm{SU}(2)\), we can show that the limit
     \[ \alpha_{\pm}(\widetilde{V} ) = \lim_{t\rightarrow  \pm\infty} \frac{ \widetilde{V}}{ \left\Vert \widetilde{V} \right\Vert}, \]
    exists. Using the (smooth) identification of \( \delta_1 \) with \( \mathbb{C} \), we can equivalently show \( \vartheta\) tends to a constant
    \[ \alpha_{\pm} = \lim_{t \rightarrow \pm \infty} \vartheta,\]
    recalling the definition of \( \vartheta\) from the previos section.

    \begin{theorem}
    \( \alpha_{\pm}(\widetilde{V} ) = \lim_{t\rightarrow \pm\infty} \widetilde{V} / \left\Vert \widetilde{V} \right\Vert \) exists.
    \end{theorem}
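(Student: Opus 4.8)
The plan is to reduce the statement to the convergence of the angle function $\vartheta$, and then to show that $\dot{\vartheta}$ is absolutely integrable near $\pm\infty$. Under the smooth identification of $\delta_1$ with $\mathbb{C}$ one has $\widetilde{V}/\Vert\widetilde{V}\Vert = v/\Vert v\Vert = (\cos\vartheta,\sin\vartheta)$, and $\mathrm{exp}\colon\mathbb{R}\to\mathbb{S}^1$ is continuous, so it is enough to prove that $\lim_{t\to+\infty}\vartheta(t)$ and $\lim_{t\to-\infty}\vartheta(t)$ exist. Note first that, because $c_1>0$, the lower bound $\langle\widetilde{V},\widetilde{V}\rangle\ge\tfrac12 c_1 t^2 + c_5 t + c_6$ obtained earlier forces $q(t):=\Vert\widetilde{V}(t)\Vert\to\infty$; since the leading coefficient is positive, in fact $q(t)^2\ge\kappa\,t^2$ for some $\kappa>0$ and all $|t|\ge T_0$, in both directions. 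In particular $\widetilde{V}(t)\neq 0$ for $|t|\ge T_0$, so $\vartheta$ is well defined there.

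The crucial estimate is $|\dot{\vartheta}(t)| = \mathcal{O}(t^{-2})$. I would obtain this from the conservation law (\ref{equation:cons2}). For the symmetric pair of $\mathrm{SU}(2)$ one has $\varphi = \varphi_3\,\hat{e}_3$ for a scalar function $\varphi_3$; translating $\ddot{\varphi} = [\ddot{\widetilde{V}},\widetilde{V}]$ through $\mathfrak{su}(2)\cong\mathbb{E}^3$ and $v = qe^{i\vartheta}$ gives $\ddot{\varphi}_3 = -\tfrac{d}{dt}\!\left(q^2\dot{\vartheta}\right)$ up to an overall nonzero constant, so integrating once yields $\dot{\varphi}_3 = -q^2\dot{\vartheta} + c_9$ for some constant $c_9$. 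Then (\ref{equation:cons2}) bounds $\left(q^2\dot{\vartheta} - c_9\right)^2 = \dot{\varphi}_3^2 \le c_2$, so $q^2|\dot{\vartheta}|$ is bounded, and dividing by $q^2\ge\kappa t^2$ gives $|\dot{\vartheta}(t)| = \mathcal{O}(t^{-2})$. Equivalently, one may simply quote the bound $q^2\dot{\vartheta}^2 = \mathcal{O}(t^{-2})$ already recorded in Section~\ref{subsection:su2} and divide by $q^2\ge\kappa t^2$.

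It then remains to integrate. Since $|\dot{\vartheta}(t)|\le C' t^{-2}$ for $|t|\ge T_0$, the improper integrals $\int_{T_0}^{\infty}|\dot{\vartheta}|\,dt$ and $\int_{-\infty}^{-T_0}|\dot{\vartheta}|\,dt$ converge, hence $\vartheta(t) = \vartheta(T_0) + \int_{T_0}^{t}\dot{\vartheta}\,ds$ is Cauchy as $t\to+\infty$ and, by the same argument, as $t\to-\infty$. Therefore $\alpha_{\pm} := \lim_{t\to\pm\infty}\vartheta(t)$ exist, and consequently $\alpha_{\pm}(\widetilde{V}) = \lim_{t\to\pm\infty}\widetilde{V}/\Vert\widetilde{V}\Vert = (\cos\alpha_{\pm},\sin\alpha_{\pm})$ exist. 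The $t\to-\infty$ case needs no separate work: under $t\mapsto -t$ the governing equation only changes $C$ to $-C$, so all of the decay estimates apply verbatim.

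The step I expect to be the real obstacle is precisely the $\mathcal{O}(t^{-2})$ decay of $\dot{\vartheta}$, as opposed to the $\mathcal{O}(t^{-1})$ that already follows from (\ref{equation:cons1}) alone: $t^{-1}$ is not integrable, so that weaker rate would leave open the possibility of $\vartheta$ winding off to $\pm\infty$, and the whole result hinges on extracting one more power of $t$ from the second conservation law (\ref{equation:cons2}) together with the quadratic growth of $q$. Everything else is bookkeeping with the identifications already set up.
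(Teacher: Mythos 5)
Your proposal is correct and follows essentially the same route as the paper: reduce to convergence of $\vartheta$, establish $|\dot{\vartheta}| = \mathcal{O}(t^{-2})$ from the conservation laws together with the quadratic lower bound on $q^2$ (valid since $c_1>0$), and integrate to get a Cauchy/absolute-integrability argument, handling $t\to-\infty$ by reparameterisation. The only difference is that you derive the key decay estimate explicitly from (\ref{equation:cons2}) via $\dot{\varphi}_3 = -q^2\dot{\vartheta}+c_9$, whereas the paper simply cites Section~\ref{subsection:su2}; your derivation is the cleaner justification of the bound $|\dot{\vartheta}|\le 1/(d_1t^2)$ that the paper's proof actually uses.
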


    \begin{proof}
    Note that we only need to consider \( t \rightarrow \infty\), as \( \widetilde{V}\) can be re-parameterised.
    Using results from section (\ref{subsection:su2}), \( \dot{\vartheta} \) behaves at most like \( \mathcal{O}(t^{-2}) \),
    \[ | \dot{\vartheta} | \leq \frac{1}{d_1 t^2},\]
        as \( q \) is bounded by a quadratic. First note that for \( s \geq r \)
	\begin{align*}
		\left\vert \vartheta(s) - \vartheta(r)  \right\vert &\leq \int^s_r \vert \dot{\vartheta} \vert \, dt \\
        & \leq \frac{1}{d_1} \left( \frac{1}{r} - \frac{1}{s} \right).
	\end{align*}
    We can show that for an unbounded sequence of increasing times, \( t_1, t_2, \dots \), the sequence \( \vartheta(t_1), \vartheta(t_2), \dots \) converges to a limit,
    \[ L = \lim_{n\rightarrow \infty} \vartheta(t_n). \]
    Given some \( \varepsilon \geq 0\), there exists a \(N\) such that for all \(n, m \geq N \), where \( n \geq m\), we have
    \begin{align*}
        \left\vert \vartheta(t_n) - \vartheta(t_m) \right\vert &\leq \frac{1}{d_1} \left( \frac{1}{t_m} - \frac{1}{t_n} \right) \\
        &\leq \frac{1}{d_1}  \left( \frac{1}{t_m} \right)  \\
        &\leq \frac{\varepsilon}{2}.
    \end{align*}
    by choosing \( t_N \geq (1 + \frac{2 }{ d_1 \varepsilon} )  \) and so the sequence is Cauchy. As \( \vartheta \) is a real function, by completeness of \( \mathbb{R}\), the sequence converges. So given \( \varepsilon > 0\), there exists an \(N\) such that for all \( n > N\)
    \[\left\vert L - \vartheta(t_n) \right\vert \leq \frac{\varepsilon}{2}.\]
    Using a similar argument as before, given an \(\varepsilon \geq 0\), there exists a \(T\) such that for \( s , t \geq T\),
    \[  \left\vert  \vartheta(s) - \vartheta(t) \right\vert \leq \frac{\varepsilon}{2} \]
    Now choose \(T = t_N\) and for \( t \geq T\) we have
    \[ \left\vert L - \vartheta(T) \right\vert \leq \frac{\varepsilon}{2},\]
    and
    \[ \left\vert \vartheta(T) - \vartheta(t) \right\vert \leq \frac{\varepsilon}{2}. \]
    By the triangle inequality this gives
    \[ \left\vert L - \vartheta(t) \right\vert \leq \varepsilon. \]
    Therefore
    \[ L = \lim_{t\rightarrow \infty} \vartheta.\]
     Hence take \( \alpha_{+} = L\). Likewise \( \alpha_{-}\) exists by reparameterising.
    \end{proof}
    Using the identification this shows \( \alpha_{\pm}(\widetilde{V}) \) exists.

    For null Riemannian cubics a similar limit was found in Theorem (5) of \cite{asymptotics-e3}. A similar approach can be used to establish a more precise statement on the convergence when the sub-Riemannian cubic is null. Define
    \begin{align}
        \mathcal{V} = \widetilde{V} + \frac{1}{ 2\left\Vert \widetilde{V} \right\Vert^2 } \ddot{\widetilde{V}}.
    \end{align}
    Recall \( q(t) = \left\Vert \widetilde{V}(t) \right\Vert \).

    \begin{theorem}
        If \( \widetilde{\mathcal{C}} = 0 \), \( \left\Vert \frac{\mathcal{V}}{q} - \alpha_{\pm}(\widetilde{V} ) \right\Vert \leq \frac{\sqrt{c_2}}{2 q^3}\).
    \end{theorem}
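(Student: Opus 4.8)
The plan is to show that $\mathcal{V}/q$ approaches $\alpha_\pm(\widetilde V)$ by exploiting the fact that, in the null case, $\mathcal{V}$ is (up to normalization) the quantity $-\tfrac12\mathrm{Ad}(\widetilde x^{-1})\dddot{\widetilde W}$ or something isospectral; more concretely, I expect $\mathcal{V}$ to satisfy a conservation law coming from Corollary~\ref{equation:cons2}. Indeed, when $\widetilde{\mathcal C}=0$ the sub-Riemannian Lie quadratic equation in $\mathfrak{su}(2)$ reads $\dddot{\widetilde V}=2(\langle\widetilde V,\dot{\widetilde V}\rangle\widetilde V-\langle\widetilde V,\widetilde V\rangle\dot{\widetilde V})$, and differentiating $\mathcal V=\widetilde V+\tfrac{1}{2q^2}\ddot{\widetilde V}$ should produce massive cancellation. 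First I would compute $\dot{\mathcal V}$ directly, using $\tfrac{d}{dt}q^2=2\langle\widetilde V,\dot{\widetilde V}\rangle$ and the evolution equation for $\dddot{\widetilde V}$, and check that $\dot{\mathcal V}$ is small — specifically that $\|\dot{\mathcal V}\|$ is controlled by $\langle\dot\varphi,\dot\varphi\rangle$ and negative powers of $q$, so that $\mathcal V$ itself converges to some limit vector, which by construction of $\alpha_\pm$ must be a positive multiple of $\alpha_\pm(\widetilde V)$.

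The second step is the quantitative tail estimate. From Corollary~\ref{equation:cons2} (with $\widetilde{\mathcal C}=0$, so $\varphi$ is the $\mathfrak h$-component and $\langle\ddot{\widetilde V},\ddot{\widetilde V}\rangle+\langle\dot\varphi,\dot\varphi\rangle=c_2$) we have $\|\ddot{\widetilde V}\|\le\sqrt{c_2}$ pointwise. Hence $\left\|\dfrac{\mathcal V}{q}-\dfrac{\widetilde V}{q}\right\|=\dfrac{\|\ddot{\widetilde V}\|}{2q^3}\le\dfrac{\sqrt{c_2}}{2q^3}$. So if one can show that $\widetilde V/q$ and $\alpha_\pm(\widetilde V)$ differ by something of strictly smaller order than $1/q^3$ — ideally that $\mathcal V/q$ is \emph{exactly} the object whose limit is $\alpha_\pm$, i.e.\ that $\mathcal V/\|\mathcal V\|$ is constant or that $\mathcal V$ points in a fixed direction — then the bound $\tfrac{\sqrt{c_2}}{2q^3}$ is inherited directly. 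The cleanest route: show $\dot{\mathcal V}=0$ in the null case (a genuine first integral, vector-valued), so $\mathcal V$ is a constant vector $\mathcal V_0$; then $\lim_{t\to\pm\infty}\widetilde V/q=\lim \mathcal V_0/q=0$ is wrong, so instead $\mathcal V_0$ must be the limiting \emph{direction}, meaning $\alpha_\pm(\widetilde V)=\mathcal V_0/\|\mathcal V_0\|$ and in fact $q\to\infty$ forces $\mathcal V/q\to 0$ unless... — here I need to be careful: more likely $\|\mathcal V\|$ grows like $q$, with $\mathcal V/q$ converging, and $\mathcal V/\|\mathcal V\|\to\alpha_\pm$. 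Either way the displayed inequality follows by writing $\dfrac{\mathcal V}{q}-\alpha_\pm(\widetilde V)=\Big(\dfrac{\mathcal V}{q}-\dfrac{\widetilde V}{q}\Big)+\Big(\dfrac{\widetilde V}{q}-\alpha_\pm(\widetilde V)\Big)$ and showing the second bracket vanishes identically in the null case (because $\widetilde V/q=\omega=e^{i\vartheta}$ with $\vartheta$ already shown to converge, and the null structure pins $\vartheta$ to be eventually constant, or the correction is absorbed).

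The main obstacle I anticipate is establishing that, in the null case, the ``remainder'' $\widetilde V/q-\alpha_\pm(\widetilde V)$ is controlled at order $o(q^{-3})$ or is outright zero — i.e.\ identifying the correct vector-valued first integral. The suspicious neatness of the bound $\tfrac{\sqrt{c_2}}{2q^3}$, which is exactly $\|\ddot{\widetilde V}\|_{\max}/(2q^3)=\|\mathcal V/q-\widetilde V/q\|_{\max}$, strongly suggests that the intended argument is: (i) prove $\mathcal V$ has constant direction (the vector first integral $\dot{\mathcal V}\parallel\mathcal V$, or $\dot{\mathcal V}=0$) in the null case, hence $\mathcal V/\|\mathcal V\|=\alpha_\pm(\widetilde V)$ exactly for all $t$; then (ii) observe $\left\|\mathcal V/q-\mathcal V/\|\mathcal V\|\right\|$ may not be what's claimed, so more plausibly (i$'$) $\alpha_\pm(\widetilde V)=\lim\mathcal V/q$ and the distance from $\widetilde V/q$ is the stated $\tfrac{\sqrt{c_2}}{2q^3}$, with the triangle inequality closing the gap once $\widetilde V/q\to\alpha_\pm$ is known from the previous theorem and the monotone tail bound from Section~\ref{subsection:su2}. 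I would first nail down which of these holds by direct computation of $\dot{\mathcal V}$; that computation — a few applications of the triple-product identity and the conservation laws — is where the real content lies, and everything after it is a one-line triangle-inequality estimate using $\|\ddot{\widetilde V}\|\le\sqrt{c_2}$.
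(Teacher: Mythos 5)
Your opening instinct is the right one and is exactly what the paper does: differentiate \( \mathcal{V}/q \) using the \(\mathfrak{su}(2)\) form of the cubic equation, watch the terms cancel, and integrate the tail. But you never carry out that computation, and every fallback you propose in its place is either false or circular, so as written there is a genuine gap. The computation gives, in the null case,
\[
\frac{d}{dt}\left(\frac{\mathcal{V}}{q}\right) = -\frac{3\,\dot{q}\,\ddot{\widetilde{V}}}{2q^{4}},
\]
which is \emph{not} zero: \(\mathcal{V}\) is not a constant vector and does not have constant direction, so the ``vector-valued first integral'' you hope for does not exist. What makes the bound come out exactly as \( \sqrt{c_2}/(2q^3) \) is not the pointwise identity \( \Vert \mathcal{V}/q - \widetilde{V}/q \Vert = \Vert \ddot{\widetilde{V}}\Vert/(2q^3) \), but the fact that \( 3\dot{q}/(2q^4) \) is the exact derivative of \( -1/(2q^3) \); combined with \( \Vert \ddot{\widetilde{V}} \Vert \le \sqrt{c_2} \) from the conservation law, the integral from \( r \) to \( s \) telescopes to \( \frac{\sqrt{c_2}}{2q(r)^3} - \frac{\sqrt{c_2}}{2q(s)^3} \), and letting \( s \to \infty \) (where \( \mathcal{V}/q \to \widetilde{V}/q \to \alpha_+ \) because \( \ddot{\widetilde{V}} \) is bounded and \( q \to \infty \)) gives the stated inequality.

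Your alternative route via the decomposition \( \mathcal{V}/q - \alpha_\pm = (\mathcal{V}/q - \widetilde{V}/q) + (\widetilde{V}/q - \alpha_\pm) \) cannot close. The only a priori control on the second bracket available before this theorem comes from \( |\dot{\vartheta}| = \mathcal{O}(t^{-2}) \), which yields \( |\vartheta - \alpha_\pm| = \mathcal{O}(t^{-1}) = \mathcal{O}(q^{-1/2}) \) --- vastly weaker than the \( \mathcal{O}(q^{-3}) \) you need, and certainly not zero (\(\vartheta\) is not eventually constant in general). The sharp statement \( \Vert \widetilde{V}/q - \alpha_\pm \Vert \le \sqrt{c_2}/(2q^3) \) is precisely the corollary the paper \emph{deduces from} this theorem, so invoking it would be circular. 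In short: commit to the derivative computation you flagged as ``where the real content lies,'' and the rest is the telescoping integral, not a triangle inequality.
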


    \begin{proof}
    Again, considering \( t \rightarrow \infty\) as the negative case can be found via re-parameterisation.
    \begin{align*}
        \frac{d}{dt} \left(  \frac{ \mathcal{V} }{q }\right) &= \frac{ \dot{\mathcal{V}}}{q} - \frac{ \dot{q} \mathcal{V} }{q^2} \\
                                                             &= \frac{1}{q} \left(  \dot{\widetilde{V}} -\frac{\dot{q}}{q^3} \ddot{\widetilde{V}} + \frac{1}{2 q^2} \left( 2 \left( \dot{q} q \widetilde{V} - q^2 \dot{\widetilde{V}} \right) + [ \widetilde{\mathcal{C}},\widetilde{V}]\right) \right) - \frac{\dot{q}}{q^2} \left(\widetilde{V} + \frac{1}{2q^2} \ddot{\widetilde{V}} \right)\\
                                                             &= -\frac{3 \dot{q} \ddot{\widetilde{V}} }{2 q^4} + \frac{ [\widetilde{\mathcal{C}},\widetilde{V}]}{2 q^3}.
    \end{align*}
        Then  if \(\widetilde{\mathcal{C}} = 0\), and noting that \( q > 0 \) , and for large enough \(t\), \( \dot{q} > 0\), assuming \(c_7 > 0 \) and \( c_1  > 0\),
    \begin{align*}
        \left\Vert \frac{ \mathcal{V}(s) }{q(s)  } - \frac{ \mathcal{V}(r) }{q(r) } \right\Vert &\leq \left\Vert \int^s_r  \frac{d}{dt} \left(  \frac{ \mathcal{V}(t) }{q(t)  }\right) \, dt \right\Vert \\
        &\leq  \int_r^s  \frac{ 3 \dot{q} \sqrt{c_2}}{2 q^4} \, dt\\
        &\leq   \frac{\sqrt{c_2}}{2 q(r)^3} -  \frac{\sqrt{c_2}}{ 2 q(s)^3} \\
        &\leq \frac{  \sqrt{c_2} }{2 q(r)^3}.
    \end{align*}
    As \( \ddot{\widetilde{V}} \) is bounded,
    \begin{align*}
        \lim_{t\rightarrow\infty} \frac{\mathcal{V} }{q} = \lim_{t\rightarrow\infty} \frac{\widetilde{V}}{q} = \alpha_{+}(\widetilde{V}).
    \end{align*}
        So finally, taking the limit as \( s \rightarrow \infty \)
    \begin{align}
        \left\Vert \alpha_{+}(\widetilde{V} ) -  \frac{\mathcal{V}(r) }{q(r)} \right\Vert \leq \frac{\sqrt{c_2}}{2 q(r)^3}.
        \label{equation:limit1}
    \end{align}
    \end{proof}

    \begin{corollary}
    \begin{equation}
        \left\Vert  q\, \alpha_{\pm}(\widetilde{V} ) - \widetilde{V} \right\Vert \leq \frac{\sqrt{c_2}}{2 q^2}.
    \end{equation}
    \end{corollary}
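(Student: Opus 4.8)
This corollary is the preceding theorem rescaled by $q$, so the plan is to read the estimate off (\ref{equation:limit1}) rather than to integrate anything new. As in the proof of that theorem, it suffices to treat $t\to+\infty$ (the $t\to-\infty$ case following by reparameterisation), one works under the same hypothesis $\widetilde{\mathcal C}=0$, and throughout $q>0$ with eventually $\dot q>0$, as established in the analysis of Section~\ref{subsection:su2}.

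First I would multiply (\ref{equation:limit1}) through by $q(r)>0$, obtaining $\left\Vert q\,\alpha_{+}(\widetilde V)-\mathcal V\right\Vert\le \frac{\sqrt{c_2}}{2q^2}$. Then I would substitute the definition $\mathcal V=\widetilde V+\frac{1}{2\Vert\widetilde V\Vert^2}\ddot{\widetilde V}=\widetilde V+\frac{1}{2q^2}\ddot{\widetilde V}$, so that $q\,\alpha_{+}(\widetilde V)-\widetilde V=\bigl(q\,\alpha_{+}(\widetilde V)-\mathcal V\bigr)+\frac{1}{2q^2}\ddot{\widetilde V}$, and finally I would invoke $\Vert\ddot{\widetilde V}\Vert\le\sqrt{c_2}$, which is immediate from (\ref{equation:cons2}) since $\langle\dot\varphi,\dot\varphi\rangle\ge 0$ forces $\langle\ddot{\widetilde V},\ddot{\widetilde V}\rangle\le c_2$ (the same bound already used in the proof of the theorem). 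The triangle inequality then delivers a bound of the claimed order $\mathcal O(q^{-2})$, and the $\alpha_{-}$ case follows by reparameterisation.

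The one step that genuinely needs care — and hence the main obstacle — is the constant. Estimating the two summands $q\alpha_{+}-\mathcal V$ and $\tfrac{1}{2q^2}\ddot{\widetilde V}$ separately only yields $\frac{\sqrt{c_2}}{q^2}$, since each contributes $\frac{\sqrt{c_2}}{2q^2}$. To reach the stated $\frac{\sqrt{c_2}}{2q^2}$ one must exploit that these two error vectors very nearly cancel: from the proof of the previous theorem, $q(r)\alpha_{+}(\widetilde V)-\mathcal V(r)=-q(r)\int_r^{\infty}\frac{3\dot q\,\ddot{\widetilde V}}{2q^{4}}\,dt$, and since $\int_r^{\infty}\frac{3\dot q}{2q^{4}}\,dt=\frac{1}{2q(r)^{3}}$ while $\ddot{\widetilde V}$ is slowly varying, this integral is $-\frac{\ddot{\widetilde V}(r)}{2q(r)^{2}}$ plus a correction built from the increments $\ddot{\widetilde V}(t)-\ddot{\widetilde V}(r)$; the leading piece cancels $\tfrac{1}{2q^2}\ddot{\widetilde V}(r)$ exactly, leaving $q(r)\alpha_{+}(\widetilde V)-\widetilde V(r)=-q(r)\int_r^{\infty}\frac{\dddot{\widetilde V}}{2q^{3}}\,dt$. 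Making this cancellation quantitative — equivalently, controlling the oscillation of $\ddot{\widetilde V}$ as $t\to\infty$ — is where the content sits; everything else is routine bookkeeping reusing only the tools from the proof of the preceding theorem.
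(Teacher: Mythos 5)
Your first two paragraphs reproduce what the paper actually does: its entire proof is the single line ``multiply equation (\ref{equation:limit1}) through by \(q\)'', which literally yields \( \Vert q\,\alpha_{+}(\widetilde V)-\mathcal V\Vert\le \tfrac{\sqrt{c_2}}{2q^2} \), with the passage from \(\mathcal V\) to \(\widetilde V\) left unaddressed. Your observation that inserting \(\mathcal V=\widetilde V+\tfrac{1}{2q^2}\ddot{\widetilde V}\) and using \(\Vert\ddot{\widetilde V}\Vert\le\sqrt{c_2}\) from (\ref{equation:cons2}) only gives the constant \(\sqrt{c_2}/q^2\) is correct, and it is a fair criticism of the statement rather than a defect in your argument: the corollary as printed (with \(\widetilde V\) and the factor \(\tfrac12\)) does not follow from the paper's own proof, and nothing downstream uses more than the order \(\mathcal O(q^{-2})\), so the honest conclusions are either the stated inequality with \(\mathcal V\) in place of \(\widetilde V\), or the inequality for \(\widetilde V\) with constant \(\sqrt{c_2}\) instead of \(\tfrac{\sqrt{c_2}}{2}\).

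The genuine gap is your third paragraph. The integration by parts is a correct identity, \( q(r)\alpha_{+}(\widetilde V)-\widetilde V(r)=-q(r)\int_r^{\infty}\tfrac{\dddot{\widetilde V}}{2q^{3}}\,dt \), but it cannot recover the factor \(\tfrac12\); it makes the estimate worse. In the null case \(\dddot{\widetilde V}=[\dot\varphi,\widetilde V]\) with \(\Vert\dot\varphi\Vert\le\sqrt{c_2}\), so the only available pointwise bound is \(\Vert\dddot{\widetilde V}\Vert\le \mathrm{const}\cdot\sqrt{c_2}\,q\); since \(q\) grows like \(t^{2}\) (when \(c_1,c_7>0\)), the right-hand side is then bounded only by \( q(r)\int_r^{\infty}\mathcal O(q^{-2})\,dt=\mathcal O(r^{-1}) \), far weaker than the target \(\mathcal O(q^{-2})=\mathcal O(r^{-4})\). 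Extracting anything sharper from that integral would require proving cancellation (oscillation of \(\ddot{\widetilde V}\)) that is precisely the content of the bound you are trying to establish, so the route is circular. You should stop after your second paragraph: record the bound \(\sqrt{c_2}/q^2\) for \(\widetilde V\) (or the bound \(\tfrac{\sqrt{c_2}}{2q^2}\) for \(\mathcal V\)), and note that the paper's stated constant appears to be an oversight.
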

    \begin{proof}
        Multiply equation (\ref{equation:limit1}) through by \(q\).
    \end{proof}
    Recall from section (\ref{subsection:su2}), in the long term limit, assuming \( c_1,c_7 > 0\), \(q\) approaches a quadratic.  Hence in the long term \(\widetilde{V} = q \, \alpha_{+}(\widetilde{V})+ \mathcal{O}(t^{-4})  \)

    In the non-null case, a different estimate can be made for \( \widetilde{V}\).
    \begin{align*}
         \frac{ \mathcal{V}(s) }{q(s)  } - \frac{ \mathcal{V}(r) }{q(r) } - \int_r^s \frac{ [\widetilde{\mathcal{C}},\widetilde{V}]}{2 q^3} \, dt&=  -\int_r^s \frac{3 \dot{q} \ddot{\widetilde{V}} }{2 q^4}  \, dt.
    \end{align*}
    As before, taking norms, and letting \( s \rightarrow \infty\),
    \begin{align*}
        \left\Vert  \alpha_{+}(\widetilde{V} )  - \frac{ \mathcal{V}(r) }{q(r) } - \int_r^\infty  \frac{ [\widetilde{\mathcal{C}},\widetilde{V}]}{2 q^3}  dt \right\Vert  &\leq \frac{\sqrt{c_2}}{2 q(r)^3}.
    \end{align*}
    Multiplying through by \(q\), we can deduce
    \begin{align}
        \widetilde{V}(r) = q(r) \alpha_{+}(\widetilde{V} ) - q(r) \left[ \widetilde{\mathcal{C}}, \int_r^\infty \frac{\widetilde{V}}{2 q^3}\, dt \right] +\mathcal{O}(r^{-4}).
        \label{equation:vest}
    \end{align}
    With this we can estimate \(V(r)\) recursively up to \( \mathcal{O}(r^{-4})\) error.

    \begin{theorem}
    \begin{align*}
        \widetilde{V}(r) = q(r) \alpha_{+}(\widetilde{V}) - q(r) \int_{r}^{\infty}  \frac{1}{2 q(t)^2} \, dt \, \left[ \widetilde{\mathcal{C}} , \alpha_{+}(\widetilde{V}) \right]   + \mathcal{O}(r^{-4}).
    \end{align*}
    \end{theorem}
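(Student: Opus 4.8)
The plan is to bootstrap the non-null estimate (\ref{equation:vest}) against itself. Throughout I use the asymptotics of section (\ref{subsection:su2}): with $c_1,c_7>0$ the refined expansion $q=(c_7t+c_8)^2/(4c_7)+2c_1/c_7+\mathcal{O}(t^{-1})$, together with the quadratic lower bound on $\langle\widetilde V,\widetilde V\rangle$ and the $\mathcal{O}(t^4)$ upper bound, gives constants $c,C>0$ with $c\,t^2\le q(t)\le C\,t^2$ for all large $t$. In particular every improper integral below converges, with $\int_r^\infty q(t)^{-2}\,dt=\mathcal{O}(r^{-3})$ and $\int_r^\infty q(t)^{-3}\,dt=\mathcal{O}(r^{-5})$.

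The first step is to observe that (\ref{equation:vest}) already pins down $\widetilde V$ to order $t^{-1}$. Since $\|\widetilde V\|=q$, the integrand $\widetilde V/(2q^3)$ has norm $1/(2q^2)=\mathcal{O}(t^{-4})$, so $\int_r^\infty\widetilde V/(2q^3)\,dt=\mathcal{O}(r^{-3})$, and multiplying by $q(r)=\mathcal{O}(r^2)$ shows that the correction term in (\ref{equation:vest}) is $\mathcal{O}(r^{-1})$. Hence $\widetilde V(t)=q(t)\,\alpha_{+}(\widetilde V)+E(t)$ with $\|E(t)\|=\mathcal{O}(t^{-1})$.

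The second step feeds this decomposition back into the integral in (\ref{equation:vest}). Splitting, $\int_r^\infty\widetilde V/(2q^3)\,dt=\bigl(\int_r^\infty dt/(2q^2)\bigr)\alpha_{+}(\widetilde V)+\int_r^\infty E/(2q^3)\,dt$, and the remainder integral has norm $\mathcal{O}(r^{-6})$ because $\|E\|=\mathcal{O}(t^{-1})$ and $q^{-3}=\mathcal{O}(t^{-6})$. Substituting into (\ref{equation:vest}) and multiplying the remainder by $q(r)=\mathcal{O}(r^2)$ yields a term of order $\mathcal{O}(r^{-4})$, which is absorbed into the error already present, while the leading correction becomes $-q(r)\bigl(\int_r^\infty dt/(2q(t)^2)\bigr)[\widetilde{\mathcal{C}},\alpha_{+}(\widetilde V)]$. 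This is exactly the claimed formula; the $t\to-\infty$ statement follows by the same reparameterisation used in the earlier theorems.

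I expect the only genuine work to be the careful control of the growth orders — in particular pinning down $c\,t^2\le q(t)\le C\,t^2$ for large $t$ and the resulting decay rates of the improper integrals — since this is what licenses the absorption of each error term into $\mathcal{O}(r^{-4})$. Everything else is bookkeeping of $\mathcal{O}$-terms together with quotations of results already established: the boundedness of $\ddot{\widetilde V}$ from (\ref{equation:cons2}) (used implicitly in deriving (\ref{equation:vest})), the existence of $\alpha_{+}(\widetilde V)$, and the identity (\ref{equation:vest}) itself.
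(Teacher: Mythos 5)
Your proposal is correct and follows essentially the same route as the paper: both substitute (\ref{equation:vest}) back into the integral $\int_r^\infty \widetilde{V}/(2q^3)\,dt$ and check that the resulting error terms are absorbed into $\mathcal{O}(r^{-4})$ using $q(t)\sim t^2$. The only difference is bookkeeping --- you bundle the commutator correction and the $\mathcal{O}(t^{-4})$ error into a single remainder $E=\mathcal{O}(t^{-1})$ and bound one integral, whereas the paper tracks the two resulting error integrals separately; your version is, if anything, slightly cleaner.
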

    \begin{proof}
        By equation (\ref{equation:vest}), substituting \( \widetilde{V}(r)\) back
        \begin{align*}
            \widetilde{V}(r) &= q(r) \alpha_{+}(\widetilde{V}) - q(r) \int_{r}^{\infty}  \frac{1}{2 q(t)^2} \, dt \, \left[ \widetilde{\mathcal{C}} , \alpha_{+}(\widetilde{V}) \right]\\
            &+  q(r)\int_r^\infty \frac{[\widetilde{\mathcal{C}},\mathcal{O}(t^{-4})]}{2 q(t)^2} dt  + q(r) \int_r^{\infty} \frac{ \left[ \widetilde{\mathcal{C}},\ \left[ \widetilde{\mathcal{C}}, \int_t^\infty \frac{\widetilde{V}(s)}{2 q(s)^3}\, ds \right] \right] }{2 q(t)^2} dt + \mathcal{O}(r^{-4}).
        \end{align*}
        We should ignore terms smaller than \( \mathcal{O}(t^{-4})\). Also recall \( \widetilde{V}\) behaves like \( \mathcal{O}(t^2)\) for large \(t\).
        \begin{align*}
            \widetilde{V}(r) &= q(r) \alpha_{+}(\widetilde{V}) - q(r) \int_{r}^{\infty} \frac{1}{2 q(t)^2} \, dt \, \left[ \widetilde{\mathcal{C}} , \alpha_{+}(\widetilde{V}) \right]\\
            &+  q(r)\int_r^\infty \frac{[\widetilde{\mathcal{C}},\mathcal{O}(t^{-4})]}{2 q(t)^3} dt  + q(r) \int_r^{\infty} \frac{ \left[ \widetilde{\mathcal{C}},\ \left[ \widetilde{\mathcal{C}}, \mathcal{O}(t^{-3}) \right] \right] }{2 q(t)^2} dt + \mathcal{O}(r^{-4})\\
            \widetilde{V}(r) &= q(r) \alpha_{+}(\widetilde{V}) - q(r) \int_{r}^{\infty} \frac{1}{2 q(t)^2} \, dt \, \left[ \widetilde{\mathcal{C}} , \alpha_{+}(\widetilde{V}) \right]\\
            &+  q(r)\mathcal{O}(r^{-9}) +  q(r) \mathcal{O}(r^{-6})+ \mathcal{O}(r^{-4})\\
            \widetilde{V}(r) &= q(r) \alpha_{+}(\widetilde{V}) - q(r) \int_{r}^{\infty} \frac{1}{2 q(t)^2} \, dt \, \left[ \widetilde{\mathcal{C}} , \alpha_{+}(\widetilde{V}) \right] + \mathcal{O}(r^{-4}).
        \end{align*}
    \end{proof}

    \section*{Example 1}
        Equations (\ref{equation:complex}) can be numerically solved with \textit{Mathematica}'s \texttt{NDSolve} function for the components \( (q,\vartheta)\) of \(v\). Figure (\ref{figure:para1}) is a parametric plot of \(v_1 = q \cos(\vartheta)\) vs \(v_2 = q \sin(\vartheta )\), where \(C=1\), \(v_1(0)= 4, v_2(0)= -1.75, \dot{v}_1(0)= -0.1, \dot{v}_2(0) = 2.5, \ddot{v}_1(0)= -5\) and \(\ddot{v}_2(0)= -5 \).

Initially we see oscillation before stabilising in the long term. Figure (\ref{figure:comp1}) shows the radial and angular components of \(v\). Note how \(q\) approaches a quadratic, \( \dot{q}\) approaches a linear function, and \( \vartheta\) approaches a constant as discussed in the previous sections.

Finally the equation for \(\widetilde{x}\) can also be numerically integrated, using the previously found \(v\). In \( \mathrm{SU}(2)\), \(\widetilde{x}\) is a matrix with four components which satisfy \( \widetilde{x}_{11}^2 +\widetilde{x}_{12}^2 + \widetilde{x}_{21}^2 + \widetilde{x}_{22}^2 = 1\), which is the sphere \( \mathbb{S}^3\). Figure (\ref{figure:x1}) shows a stereographic projection of the components of \( \widetilde{x}\) onto \( \mathbb{R}^3\),
        via
        \[  \left( \widetilde{x}_{11}, \widetilde{x}_{12}, \widetilde{x}_{21} , \widetilde{x}_{22} \right) \rightarrow \frac{1}{1-\widetilde{x}_{22}} \left( \widetilde{x}_{11} ,\widetilde{x}_{12} ,\widetilde{x}_{21} \right). \]
        \begin{figure}[h]
            \centering
            \includegraphics[width=0.3\textwidth]{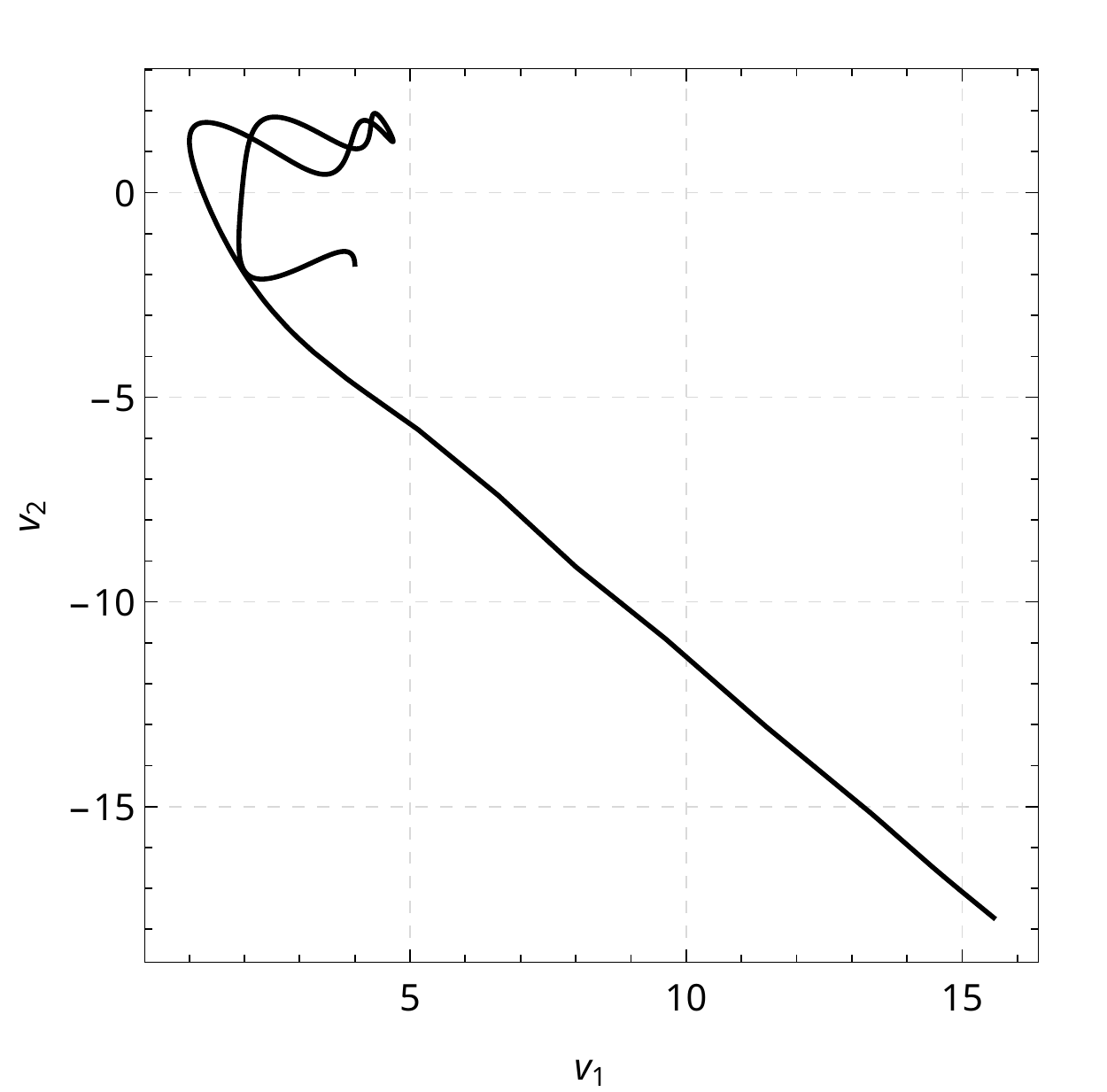}
            \caption{Parametric plot of \(v\) per example 1.}
            \label{figure:para1}
        \end{figure}

        \begin{figure}[h]
            \subfloat[]{\includegraphics[width=0.45\textwidth]{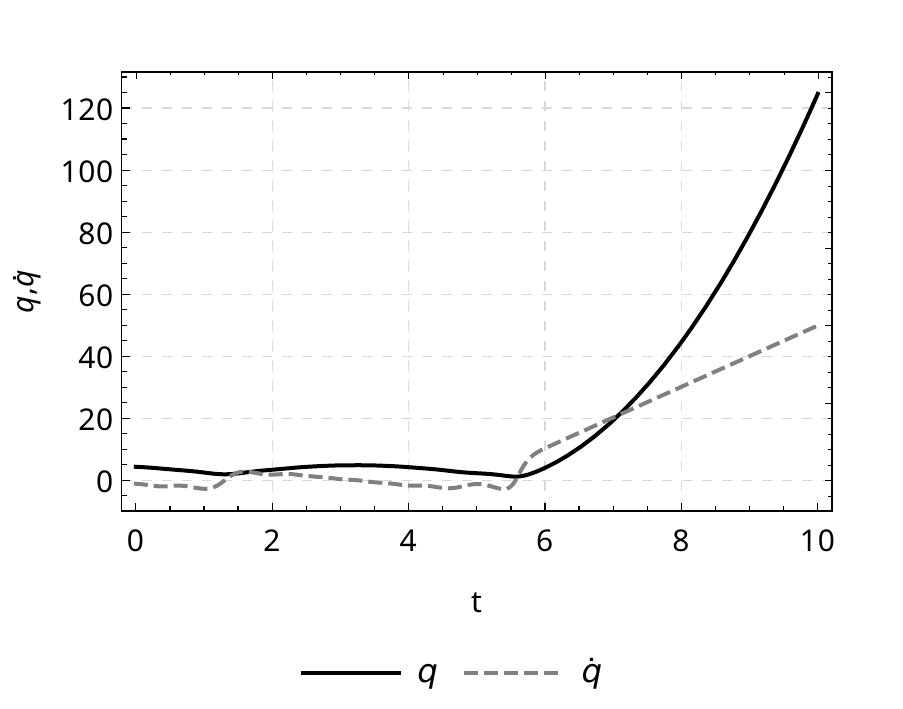}}
            \subfloat[]{\includegraphics[width=0.45\textwidth]{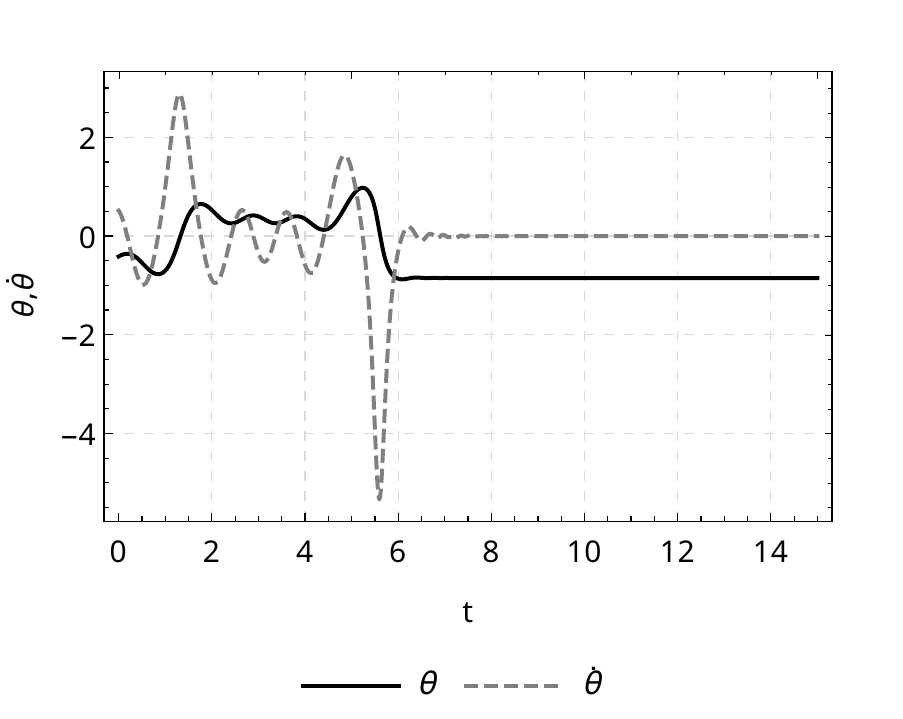}}
            \caption{Radial and angular components of \(v\) per example 1.}
            \label{figure:comp1}
        \end{figure}

        \begin{figure}[h]
            \centering
            \includegraphics[width=0.3\textwidth]{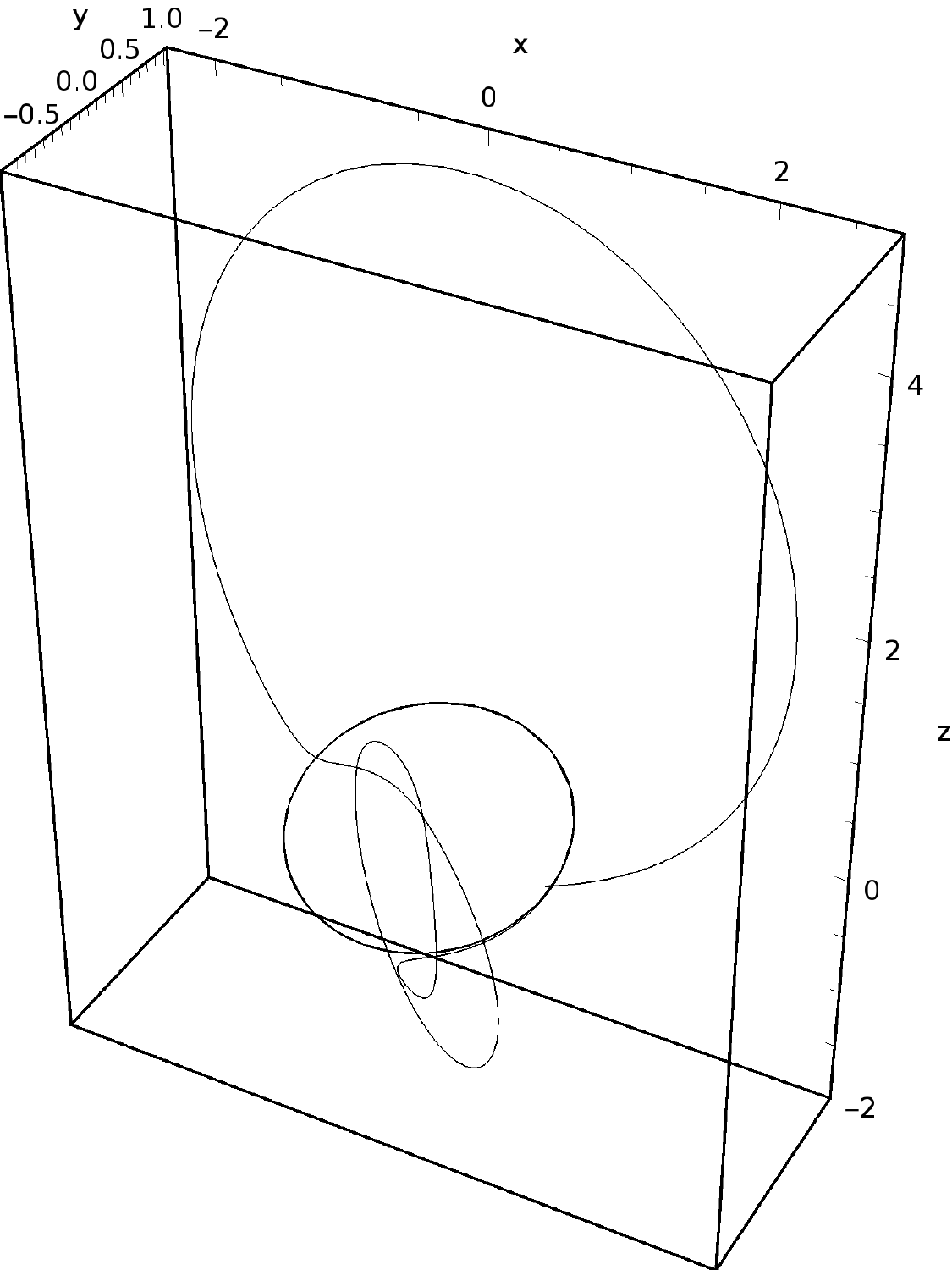}
            \caption{Components of \( \widetilde{x} \) projected into \( \mathbb{R}^3\) per example 1.}
            \label{figure:x1}
        \end{figure}
    \cleardoublepage
    \section*{Example 2}
    Setting \( C=0\) can yield just as interesting dynamics as \( C \neq 0\).  The following figures show \(v\) as per equations (\ref{equation:complex}) with \(v_1(0)= 2, v_2(0)= -1, \dot{v}_1(0)= 2, \dot{v}_2(0) = -1, \ddot{v}_1(0)= 0\) and \(\ddot{v}_2(0)= 5 \).
         \begin{figure}[h]
            \centering
            \includegraphics[width=0.3\textwidth]{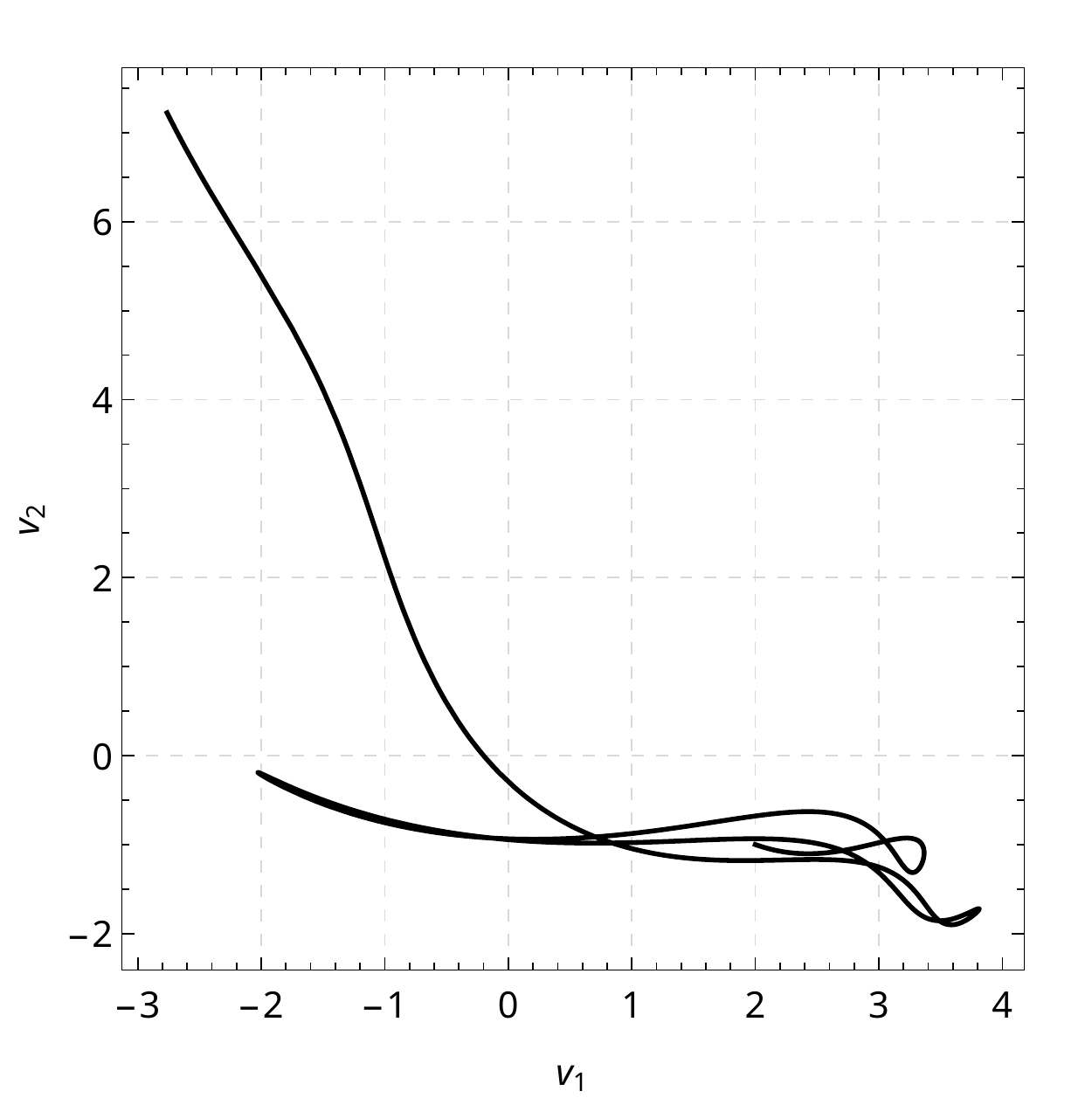}
            \caption{Parametric plot of \(v\) per example 2.}
        \end{figure}

        \begin{figure}[h]
            \subfloat[]{\includegraphics[width=0.45\textwidth]{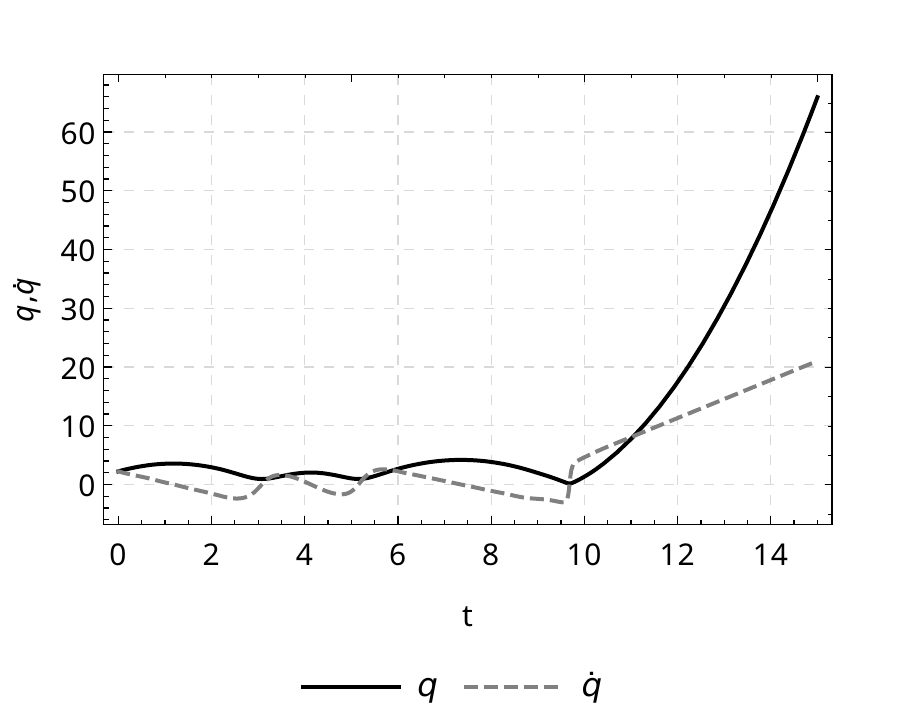}}
            \subfloat[]{\includegraphics[width=0.45\textwidth]{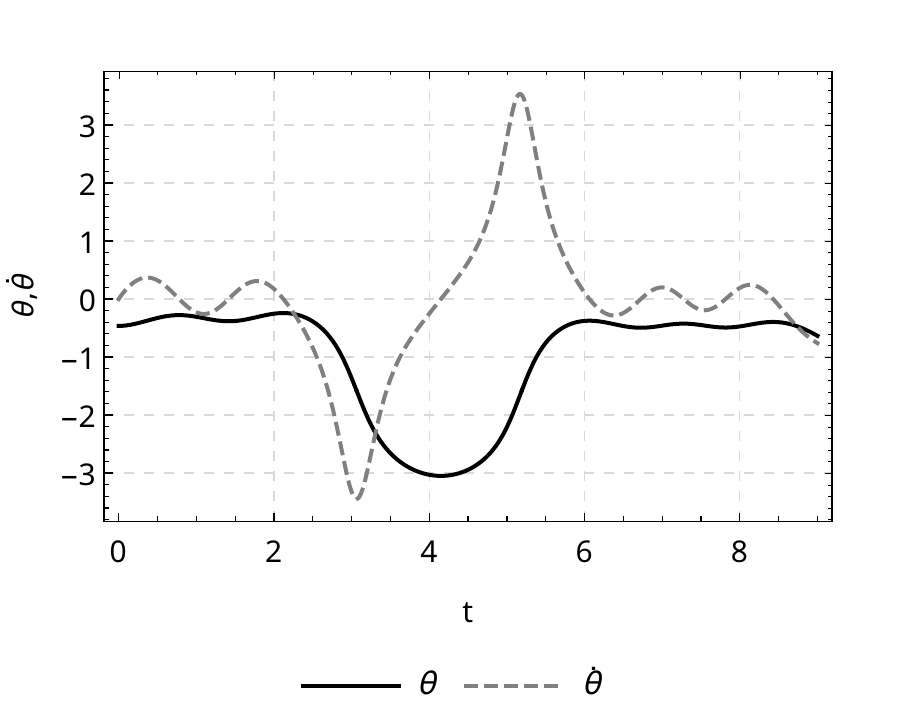}}
            \caption{Radial and angular components of  \(v\) per example 2.}
        \end{figure}

        \begin{figure}[h]
            \centering
            \includegraphics[width=0.3\textwidth]{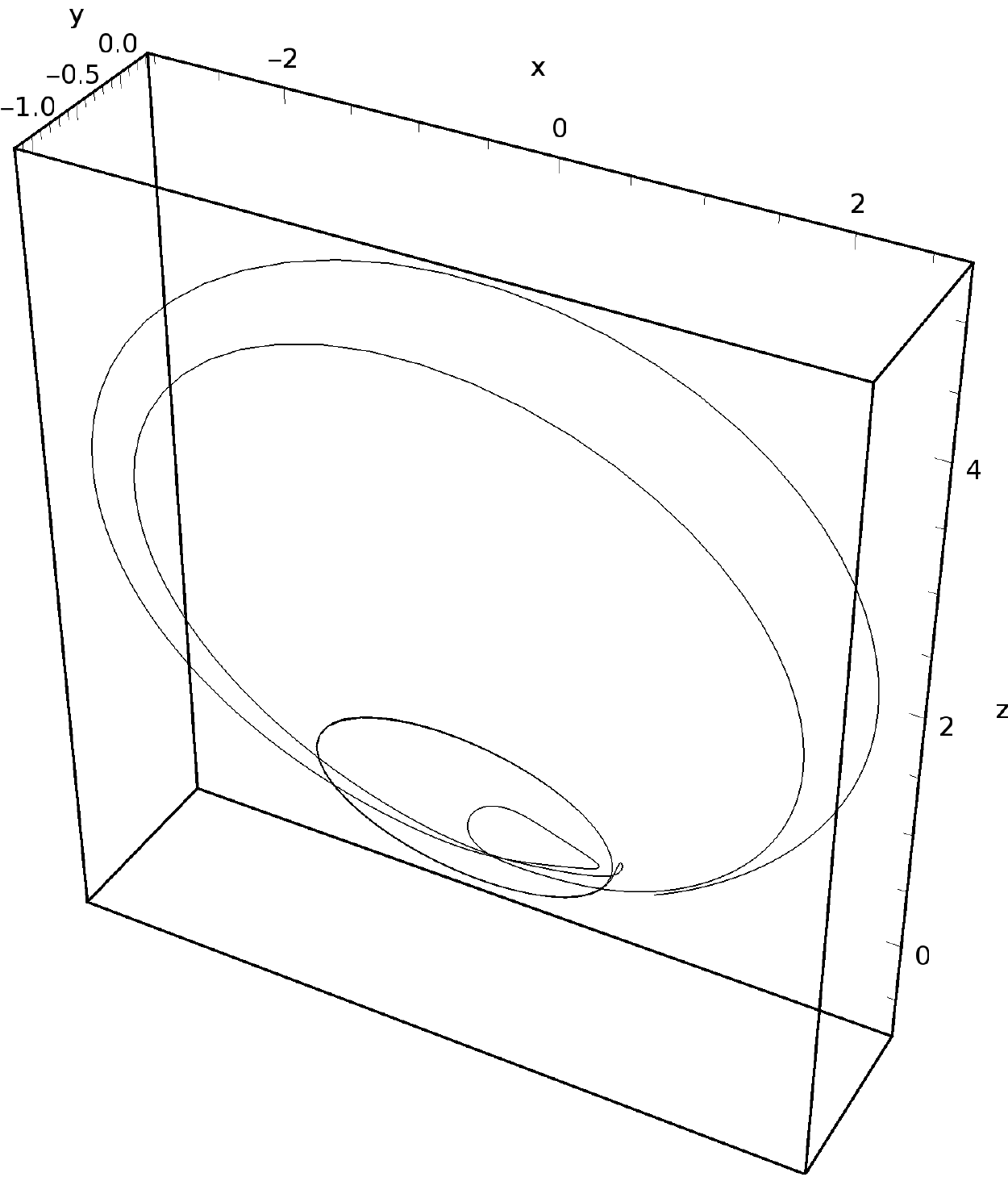}
            \caption{Components of \( \widetilde{x} \) projected into \( \mathbb{R}^3\) per example 2.}
        \end{figure}

    \newpage
    \bibliography{references.bib}

\end{document}